\documentclass[12pt, a4paper]{amsart}
\usepackage{amscd,amsmath,amssymb,amsthm,amsfonts}
\usepackage[alphabetic]{amsrefs}
\usepackage{mathtools,mathrsfs}
\usepackage[shortlabels]{enumitem}
\usepackage{pgf}
\usepackage[all]{xy}
\usepackage{tikz-cd}
\usetikzlibrary{positioning,shapes,arrows,chains,matrix}

\usepackage{xcolor}
\usepackage[hypertexnames=true, citecolor = blue, colorlinks = true, linkcolor = blue, linktoc = none, pdffitwindow = false, urlcolor=blue,urlbordercolor = white, pdftoolbar=false]{hyperref}

\setlength{\oddsidemargin}{3cm}
\setlength{\textwidth}{15.7cm}
\setlength{\textheight}{53.2pc}
\setlength{\evensidemargin}{.2cm}
\setlength{\oddsidemargin}{.2cm}

\def\Aut{\operatorname{Aut}}

\def\id{\operatorname{id}}

\def\id{\operatorname{id}}

\def\R{\mathbb{R}}
\def\N{\mathbb{N}}
\def\Z{\mathbb{Z}}




\newcommand{\CI}[0]{\mathcal{I}} 
 
 \newcommand{\CN}[0]{\mathcal{N}}

 \newcommand{\CT}[0]{\mathcal{T}}


\def\ssubset{\subset\hspace*{-1.7mm}\subset}


\def\gxp{G \rtimes_\theta P}

\newtheorem{thm}{Theorem}[section]
\newtheorem{corollary}[thm]{Corollary}
\newtheorem{lemma}[thm]{Lemma}

\newtheorem{proposition}[thm]{Proposition}

\theoremstyle{definition}
\newtheorem{definition}[thm]{Definition}
\newtheorem{notation}[thm]{Notation}

\theoremstyle{remark}
\newtheorem{remark}[thm]{Remark}

\newtheorem{example}[thm]{Example}

\numberwithin{equation}{section}

\begin{document}

\title{The nature of generalized scales}

\author[N.~Stammeier]{Nicolai Stammeier}
\address{Department of Mathematics \\ University of Oslo \\ P.O. Box 1053 \\ Blindern \\ NO-0316 Oslo \\ Norway}
\email{nicolsta@math.uio.no}

\subjclass[2010]{20M15}
\keywords{right LCM monoids, growth functions, Saito's degree maps, graph products, C*-algebras, KMS-states}
\thanks{This research was supported by RCN through FRIPRO 240362.}

\begin{abstract}
The notion of a generalized scale emerged in recent joint work with Afsar--Brownlowe--Larsen on equilibrium states on $C^*$-algebras of right LCM monoids, where it features as the key datum for the dynamics under investigation. This work provides the structure theory for such monoidal homomorphisms. We establish uniqueness of the generalized scale and characterize its existence in terms of a simplicial graph arising from a new notion of irreducibility inside right LCM monoids. In addition, the method yields an explicit construction of the generalized scale if existent. We discuss applications for graph products as well as algebraic dynamical systems, and reveal a striking connection to Saito's degree map. 
\end{abstract}

\date{\today}
\maketitle

\section{Introduction}
\noindent The notion of a generalized scale first appeared in \cite{ABLS} as an ad-hoc definition that allowed for an abstract and unified theory of equilibrium states for a number of quantum statistical mechanical systems \cites{LR2,BaHLR,LRR,LRRW,CaHR}. More precisely, all these case studies can be viewed as considerations for $C^*$-algebras of \emph{right LCM monoids}, that is, left cancellative monoids in which the intersection of two principal right ideals is either empty or another principal right ideal again. Here, right LCM refers to the existence of a \emph{right Least Common Multiple} given the presence of a right common multiple. We note that right LCM monoids are commonly referred to as right LCM semigroups in the operator algebraic literature as the presence of a unit is taken for granted in this context. Since this work is part of semigroup theory rather than operator algebras, we decided to correct this potentially misleading terminology.

In order to give the definition of a generalized scale, it is convenient to first recall some notation: Let $S$ be a right LCM monoid. For $s,t \in S$, we follow the notation of \cite{Spi1} and write $s\Cap t$ if $sS\cap tS$ is nonempty, and $s\perp t$ otherwise, in which case we call $s$ and $t$ orthogonal. An important submonoid of a right LCM monoid $S$ is its \emph{core} $S_c := \{a \in S \mid a\Cap s \text{ for all } s \in S\}$ introduced in \cite{Star}, inspired by \cite{CrispLaca}. As in \cite{ABLS}*{Subsection~2.1}, $s,t\in S$ are \emph{core equivalent}, denoted $s\sim t$, if there are $a,b \in S_c$ with $sa=tb$. It will be convenient to work with the following definition of a generalized scale that avoids the notion of accurate foundation sets:

\begin{definition}\label{def:gen scale}
A \emph{generalized scale} on a right LCM monoid $S$ is a nontrivial homomorphism $N\colon S \to \N^\times, s \mapsto N_s$ satisfying 
\begin{enumerate}[(i)]
\item $\lvert N^{-1}(n)/_\sim \rvert = n$ for all $n \in N(S)$;
\item $N_s=N_t$ implies $s\sim t$ or $s\perp t$; and
\item for all $s \in S, n\in N(S)$, there exists $t \in N^{-1}(n)$ with $s \Cap t$.
\end{enumerate}
\end{definition}

Proposition~\ref{prop:eq def of GS} will establish that this definition is equivalent to the original one given in \cite{ABLS}*{Definition~3.1(A3)}. The assumption that $N$ has to be nontrivial excludes precisely the left reversible right LCM monoids and is needed to obtain a nontrivial time evolution of the quantum statistical mechanical system based on $C^*(S)$, see \cite{ABLS}*{Section~4}.

Arguably, the definition of a generalized scale was useful, but rather elusive. It is thus natural to ask basic questions such as:
\begin{enumerate}[1.]
\item How many generalized scales may a right LCM monoid have?
\item When does a right LCM monoid admit a generalized scale?
\item Which structural aspects are captured by a generalized scale? 
\end{enumerate}
Already in \cite{Sta4}*{Corollary~4.9} it had been observed that right-angled Artin monoids have at most one generalized scale. These monoids were first considered in \cite{CF} (also known as \emph{free partially commutative monoids} or \emph{trace monoids}). This result can then be further exploited with the help of \cite{ABLS}*{Proposition~5.4} to show uniqueness of the generalized scale for self-similar group actions \cite{ABLS}*{Subsection~5.3} and Baumslag--Solitar monoids with positive parameters \cite{ABLS}*{Subsection~5.5}. Combining \cite{Sta4}*{Corollary~4.5} with \cite{ABLS}*{Proposition~3.6(v)}, it follows that there is an abundance of right LCM monoids without generalized scales.

In the present work, we start by answering the first question by establishing a uniqueness theorem for generalized scales, see Theorem~\ref{thm:uniqueness}. Its proof relies on the preservation of arithmetic aspects of $\N^\times$ under generalized scales. On the way to this result, we show that condition (A4) from \cite{ABLS}*{Definiton~3.1} is redundant, see Proposition~\ref{prop:A3 gives A4}. This is of particular relevance to \cite{BLRSt}, where, using this work, we obtain a classification of KMS-states for arbitrary right LCM monoids that admit a (necessarily unique) generalized scale. That is to say, the only requirement left for obtaining the conclusions of \cite{ABLS}*{Theorem~4.3} is the existence of a generalized scale. The guiding theme for this improvement is to work with the quotient space $S/_\sim$ and the action $\alpha\colon S_c \curvearrowright S/_\sim$ induced by left multiplication instead of using arithmetic structures inside the semigroup. In return, \cite{BLRSt} indicates that a very natural direction for further progress on KMS-states in the context of semigroup C*-algebras is a sophisticated understanding of generalized scales on right LCM monoids, which is the aim of the present work.

The answers to the second and third questions are given in Theorem~\ref{thm:char existence of the GS} and require more work: A central object in the proof of Theorem~\ref{thm:uniqueness} is the set of irreducible elements $\text{Irr}~N(S)$ of the submonoid $N(S) \subset \N^\times$. In Proposition~\ref{prop:indec = preimages of Irr(N(S))}, we note that their preimages under $N$ are precisely the elements $s \in S$ with the following property: whenever $s=tr$ for $t,r \in S$, then either $t \in S_c$ or $r \in S_c$. We shall call these elements \emph{noncore irreducible}, see Definition~\ref{def:non-core irred}, and denote their collection by $\CI(S)$. For a basic understanding of their behavior, we keep track of whether two noncore irreducible elements $s$ and $t$ satisfy $s \perp t$ or $s\Cap t$. Since this is a property of $[s],[t] \in S/_\sim$ rather than their representatives, we reduce to equivalence classes. By drawing an edge between $[s]$ and $[t]$ whenever $s\Cap t$, we obtain a simplicial graph $\Gamma(S)$ in Definition~\ref{def:core graph}, the \emph{core graph} of $S$. It turns out that the restriction of $\alpha$ to $\CI(S)/_\sim$ induces an action $\beta\colon S_c \curvearrowright \Gamma(S)$ by graph automorphisms, see Proposition~\ref{prop:alpha restricts to co-conn comp}, which may be of independent interest. The existence of a generalized scale implies that this action is simply a composition of permutations of the vertex sets of the coconnected components of $\Gamma(S)$, see Corollary~\ref{cor:alpha permutes co-conn comps} and Theorem~\ref{thm:char existence of the GS}~(i).

With the core graph at our disposal, we set out to characterize the existence of the generalized scale and to provide an explicit construction in Theorem~\ref{thm:char existence of the GS}. We find that, in addition to quite restrictive features of the core graph, see Theorem~\ref{thm:char existence of the GS}(i),(ii), the existence of a generalized scale also requires that 
\begin{enumerate}
\item[(iii)] $S$ is \emph{noncore factorable}, that is, every element in $S\setminus S_c$ be core equivalent to a finite product of noncore irreducibles; and that
\item[(iv)] $S$ has \emph{balanced factorization}: for $s,t \in \CI(S)$ with $s\Cap t$, there are $s',t' \in \CI(S)$ such that $sS\cap tS = st'S, st'=ts'$ with $[s']$ and $[t']$ belonging to the same coconnected component of $\Gamma(S)$ as $[s]$ and $[t]$, respectively. 
\end{enumerate}
As to be expected, showing that the conditions (i)--(iv) are not only necessary, but in fact sufficient for the existence of a generalized scale is the hard part of the proof. One of the key tools in the proof is an algorithm for computing a right LCM of products of noncore irreducibles, see Remark~\ref{rem:rLCM for products built from I(S)}.

In the final section, we first describe the core graph and its features for the two elementary examples of self-similar group actions in \ref{subsec:ex1} and the ax$+$b-semigroup over the natural numbers in \ref{subsec:ex2}. For algebraic dynamical systems, that is, suitable actions of right LCM monoids on discrete groups by injective group endomorphisms, the application of Theorem~\ref{thm:char existence of the GS} is less straightforward. We examine the sufficient criterion for the existence of a generalized scale from \cite{ABLS}*{Proposition~5.11(i)}, and find that it is not necessary in general, see Example~\ref{ex:ads with free monoid}. On the other hand, Ledrappier's shift is an example that appears to lack a generalized scale precisely for the violation of condition (b) of Remark~\ref{rem:I(GxP)=GxI(P)}, see Example~\ref{ex:Ledrappier shift}. Algebraic dynamical systems also provide the flexibility to build examples where $\beta$ switches coconnected components of $\Gamma(S)$, see Example~\ref{ex:Z2 xp with flip}. 

Motivated by the findings in \cite{Sta4}, we investigate applications to graph products of right LCM monoids, and obtain a result which reduces the considerations to the coconnected case, see Theorem~\ref{thm:gen scales for graph products}. However, already Example~\ref{ex:graph products gone mad} shows that graph products of coconnected graphs may well behave counterintuitively, depending on the combination of edge set and family of vertex semigroups. Roughly speaking, very few graph products admit generalized scales. This even remains true for the most elementary examples of this type, namely right-angled Artin monoids, see \cite{Sta4}*{Corollary~4.9}. 

In summary, the present work shows that the generalized scale represents an intriguing set of characteristic features of the right LCM monoid in question, see Theorem~\ref{thm:char existence of the GS}. These features are common for many types of right LCM monoids, see \cite{ABLS}. However, there are also many interesting examples where different parts of the requirements for the existence of the generalized scale are not met. This motivates the search for a more flexible notion of such maps, the outcome of which we present in Subsection~\ref{subsec:saito}: Generalized scales correspond to special kinds of Saito's degree maps, see \cite{Sai} for details.\vspace*{1mm}

\noindent\emph{Acknowledgements:} The author is grateful to Nathan Brownlowe, Nadia S.~Larsen, Jacqui Ramagge, and Jack Spielberg for valuable discussions. He wishes to thank the anonymous referee for valuable comments leading to an improved exposition of the material, in particular for suggesting an alternative proof to Proposition~\ref{prop:A3 gives A4}, see Remark~\ref{rem:alt proof by referee}, and for spotting details in the proofs of Lemma~\ref{lem:permute in prod of irreds-new}~(i) and Theorem~\ref{thm:char existence of the GS} that needed extra care.

\section{Preliminaries}\label{sec:prelim}
In an attempt to make this work essentially self-contained, we shall briefly recall the auxiliary results from \cite{ABLS} that we will need and make a few additional observations to help the reader familiarize with the concepts of this work. Throughout, let $S$ be a right LCM monoid. Let us start with the first part of \cite{ABLS}*{Lemma~3.5}:

\begin{lemma}\label{lem:cap with core}
Let $a \in S_c$ and $s \in S$. If $b \in S$ satisfies $aS\cap sS = sbS$, then $b \in S_c$.
\end{lemma}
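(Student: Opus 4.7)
The goal is to show that $b$ has a right common multiple with every element of $S$. Fix an arbitrary $r \in S$; the plan is to produce some element in $bS \cap rS$.

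The key idea is to apply the core property of $a$ not to $r$ itself, but to the element $sr$. Since $a \in S_c$, we have $a \Cap sr$, so there exist $x, y \in S$ with $ax = sry$. In particular, $ax \in aS \cap sS$, and by hypothesis this intersection equals $sbS$. Thus $ax = sbz$ for some $z \in S$, and combining this with the previous equation gives $sbz = sry$.

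Now I would invoke left cancellativity of $S$ (which is part of the definition of a right LCM monoid) to cancel $s$ from both sides, obtaining $bz = ry \in bS \cap rS$. Since $r$ was arbitrary, this shows $b \Cap r$ for all $r \in S$, i.e., $b \in S_c$.

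There is no real obstacle here; the only minor subtlety is noticing that one should feed $sr$, rather than $r$, into the definition of the core so that the chain of equalities routes through the explicit expression $sbS$ for $aS \cap sS$. Left cancellation is then precisely what turns an equation prefixed by $s$ into the desired common multiple of $b$ and $r$.
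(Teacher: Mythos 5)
Your proof is correct and is essentially the paper's own argument, written element-wise rather than with ideal intersections: the paper likewise applies $a \Cap st$ for arbitrary $t$, routes the common multiple through $aS \cap sS = sbS$, and uses left cancellation of $s$ to conclude $t \Cap b$. No gaps.
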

\begin{proof}
Let $a \in S_c$ and $s,b \in S$ satisfy $sS\cap aS = sbS$ and fix $t\in S$. Then 
\[s(tS \cap bS)= stS\cap sS \cap aS = stS \cap aS \neq \emptyset\]
shows $t\Cap b$, and hence $b\in S_c$ since $t$ was arbitrary.
\end{proof}

Recall that a subsemigroup $T\subset S$ is said to be \emph{hereditary} if it has the following property: If $s \in S$ satisfies $t \in sS$ ($t\geq s$) for some $t \in T$, then $s \in T$. 

\begin{lemma}\label{lem:rLCM if core eq}
The core $S_c$ is a hereditary submonoid of $S$. In particular, if $s,t \in S$ satisfy $s\sim t$, then $sS\cap tS=saS$ for some $a \in S_c$.
\end{lemma}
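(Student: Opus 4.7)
The plan is to prove the three assertions — $S_c$ being a submonoid, $S_c$ being hereditary, and the ``in particular'' claim — in that order, using each to set up the next.

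For the submonoid property, one checks that the unit $e \in S_c$ trivially since $eS=S$ meets every $sS$. For closure under multiplication, suppose $a,b \in S_c$ and fix $s \in S$; I want to exhibit a common multiple of $ab$ and $s$. Since $a\Cap s$ and $S$ is right LCM, I can write $aS\cap sS = wS$ with $w=ax=sy$ for some $x,y \in S$. Now apply the core property of $b$ to the element $x$: there exist $z,z' \in S$ with $bz=xz'$, and then $abz = axz' = syz' \in abS \cap sS$, so $ab \Cap s$. Since $s$ was arbitrary, $ab \in S_c$.

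For the hereditary property, suppose $s \in S$ and $t \in sS$ with $t \in S_c$, say $t=sr$. For any $u \in S$, pick a common multiple $tv=uw$ of $t$ and $u$; then $s(rv)=tv=uw$ shows $s\Cap u$, and since $u$ was arbitrary, $s \in S_c$.

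For the last claim, let $s\sim t$ so that $sa=tb$ for some $a,b \in S_c$. Then $sa \in sS\cap tS$, so this intersection is nonempty and, by the right LCM property, equals $scS$ for some $c \in S$. In particular $sa = scd$ for some $d \in S$, and left cancellation yields $a=cd$, i.e.\ $a \in cS$. As $a \in S_c$ and $S_c$ is hereditary, we conclude $c \in S_c$, finishing the proof.

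None of these steps is conceptually difficult; the only one that deserves attention is the closure-under-multiplication step, since it is the one place where the right LCM hypothesis is invoked to glue two ``common multiple'' conditions together — and it is worth noting that Lemma~\ref{lem:cap with core} can be viewed as a special case of the hereditary property proved here, which is why I prefer a direct approach rather than trying to bootstrap from Lemma~\ref{lem:cap with core}.
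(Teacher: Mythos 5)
Your proof is correct and, for the two claims the paper actually argues (heredity, and the ``in particular'' statement via left cancellation plus heredity), it follows essentially the same route; your additional verification that $S_c$ is closed under multiplication is also correct, though the paper simply takes the submonoid property for granted. One caveat: your closing remark that Lemma~\ref{lem:cap with core} ``can be viewed as a special case of the hereditary property'' is not accurate. Heredity concerns left divisors of core elements ($t\in S_c$, $t\in sS$ $\Rightarrow$ $s\in S_c$), whereas Lemma~\ref{lem:cap with core} concerns the right complement $b$ in $aS\cap sS=sbS$ with $a\in S_c$; its hypotheses exhibit no core element in $bS$, so heredity does not apply, and the paper proves it by a separate argument, namely $s(tS\cap bS)=stS\cap sS\cap aS=stS\cap aS\neq\emptyset$ for arbitrary $t$. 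This mischaracterization does not affect the validity of your proof of the present lemma, since you never use it.
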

\begin{proof}
For the first claim, we note that for $a \in S, b \in S_c$ with $b \in aS$, we have $aS\cap sS \supset bS \cap sS \neq \emptyset$ for all $s \in S$, so that $a \in S_c$. For the second part, $s\sim t$ implies that there are $b,c \in S_c$ with $sb=tc$. So we have $sS\cap tS=saS$ for some $a \in S$ with $b \in aS$. Thus the first part yields $a\in S_c$.
\end{proof}

\begin{lemma}\label{lem:char of core eq}
Let $s,t \in S$. Then $s\sim t$ holds if and only if $s\Cap r \Leftrightarrow t\Cap r$ for all $r \in S$.
\end{lemma}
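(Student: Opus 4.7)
\medskip

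\noindent\textbf{Proof plan.} The equivalence has two directions, each resting on a well-chosen use of the right LCM property together with left cancellation.

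For the forward implication, assume $s \sim t$, so that there are $a,b \in S_c$ with $sa = tb$. Given $r \in S$ with $s \Cap r$, pick $u,v$ with $su = rv$. The idea is to use the fact that $a \in S_c$ meets every element of $S$ in order to reroute this common multiple past $a$: choose $x,y$ with $ax = uy$, and then observe
\[
tbx \;=\; sax \;=\; suy \;=\; rvy,
\]
which exhibits $t \Cap r$. Swapping the roles of $s$ and $t$ (using $b \in S_c$) gives the converse, establishing the biconditional $s \Cap r \Leftrightarrow t \Cap r$.

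For the backward implication, assume $s \Cap r \Leftrightarrow t \Cap r$ for all $r \in S$. Applying the hypothesis to $r = t$ yields $s \Cap t$, so by the right LCM property there exist $p,q \in S$ with $sS \cap tS = spS$ and $sp = tq$. The plan is to show that both $p$ and $q$ lie in $S_c$, which by Lemma~\ref{lem:rLCM if core eq} is in fact already implied by one of them lying in $S_c$, but I will verify both directly for symmetry. Fix an arbitrary $r' \in S$ and apply the hypothesis with $r = sr'$: the relation $s \Cap sr'$ is trivial, so $t \Cap sr'$, yielding $u,v$ with $tu = sr'v$. This element lies in $sS \cap tS = spS$, so $sr'v = spw$ for some $w$, and left cancellation produces $r'v = pw$, i.e.\ $p \Cap r'$. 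Running the symmetric argument with $r = tr'$ establishes $q \Cap r'$. Since $r'$ was arbitrary, $p,q \in S_c$, and $sp = tq$ gives $s \sim t$.

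The only potential subtlety is making sure that the right LCM extracted in the backward direction is used consistently and that left cancellativity of $S$ (inherent in being right LCM) is invoked at the right places; no additional structural input is needed beyond Lemmas~\ref{lem:cap with core} and \ref{lem:rLCM if core eq} and the definitions of $S_c$ and $\sim$.
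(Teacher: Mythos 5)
Your proof is correct and follows essentially the same route as the paper: the forward direction reroutes a common multiple of $s$ and $r$ past the core elements $a,b$ (the paper packages this slightly differently via Lemma~\ref{lem:rLCM if core eq}, getting $sS\cap tS=saS$ with $sa=tb$ and arguing $r\Cap s \Leftrightarrow r\Cap sa=tb \Leftrightarrow r\Cap t$), and your backward direction is the paper's argument, testing the hypothesis against $r=sr'$ and $r=tr'$ and using left cancellation to place both cofactors $p,q$ of the right LCM in $S_c$. The one blemish is the parenthetical claim that Lemma~\ref{lem:rLCM if core eq} lets one of $p,q\in S_c$ imply the other: hereditarity gives no such implication in general (in the free monoid on $x,y$ take $s=xy$, $t=x$, so $sS\cap tS=spS$ with $p=1\in S_c$ but $sp=tq$ forces $q=y\notin S_c$); this is harmless here only because you verify both memberships directly from the hypothesis.
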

\begin{proof}
Let $s,t \in S$. Suppose first that $s\sim t$. Lemma~\ref{lem:rLCM if core eq} shows that $sS\cap tS = saS$ with $sa=tb$ for some $a,b \in S_c$. For $r \in S$, we thus get $r\Cap s \Leftrightarrow r\Cap sa=tb \Leftrightarrow r\Cap t$.

Conversely, assume that $s\Cap r \Leftrightarrow t\Cap r$ for all $r \in S$. In particular, picking $r:=s$ yields $s\Cap t$, say $sS\cap tS=saS$ with $sa=tb$ for some $a,b \in S$. We claim that $a$ and $b$ belong to $S_c $. For $p \in S$, let us consider $sp$. As $sp \Cap s$, we have $sp \Cap t$, which allows us to conclude
\[s(pS\cap aS) = spS\cap sS\cap tS = spS \cap tS \neq \emptyset.\]
Hence we obtain $a\Cap p$ for all $p \in S$, which is $a \in S_c$. Arguing in the same way for $b$, we also get $b \in S_c$.
\end{proof}

Following the terminology of \cite{SY}, the notion of a \emph{foundation set} was introduced in \cite{BRRW} for right LCM monoids: A foundation set is a finite subset $F\subset S$ (we write $F\ssubset S$ to indicate this) such that for every $s \in S$ there is $t \in F$ with $s\Cap t$. This was refined in \cite{BS1} to the notion of an \emph{accurate foundation set}, which is a foundation set $F$ satisfying $s\perp t$ for all $s,t \in F, s\neq t$. 
 
According to \cite{ABLS}*{Definition~3.1(A3)}, a nontrivial homomorphism $N\colon S \to \N^\times$ is called a \emph{generalized scale} if 
\begin{enumerate}[(a)]
\item $\lvert N^{-1}(n)/_\sim \rvert = n$ for all $n \in N(S)$; and
\item for each $n\in N(S)$, every transversal for $N^{-1}(n)/_\sim$ is an accurate foundation set for $S$.
\end{enumerate} 
Let us recall some key properties for generalized scales, namely \cite{ABLS}*{Proposition~3.6(i)-(iv)} without proof.

\begin{proposition}\label{prop:ABLS3.6} 
Let $N\colon S \to \N^\times$ be a generalized scale in the sense of \cite{ABLS}*{Definition~3.6(A3)}.
\begin{enumerate}[(i)]
\item $N^{-1}(1) = S_c$, and this is a proper subsemigroup of $S$.
\item For $s,t \in S$ with $N_s=N_t$, either $s\sim t$ or $s\perp t$ holds. 
\item Let $F$ be a foundation set for $S$ with $F \subset N^{-1}(n)$ for some $n \in N(S)$. If $\lvert F \rvert = n$ or $F$ is accurate, then $F$ is a transversal for $N^{-1}(n)/_\sim$.
\item Whenever $s,t \in S$ satisfy $sS\cap tS=rS$, then $N_sN(S)\cap N_tN(S) = N_rN(S)$. 
\end{enumerate}
\end{proposition}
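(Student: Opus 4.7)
The plan is to exploit the accurate foundation condition (A3)(b) together with the cardinality condition (A3)(a) to extract structural information about the fibers of $N$, converting arithmetic statements in $\N^\times$ into factorization statements in $S$. For part (i), applying (A3)(a) with $n=1$ shows that $N^{-1}(1)$ is a single $\sim$-class, and (A3)(b) forces any singleton transversal to consist of an element $\Cap$-related to every $s \in S$, i.e., an element of $S_c$. Combined with the fact that $S_c$ is closed under $\sim$ (since it is hereditary, see Lemma~\ref{lem:rLCM if core eq}), this gives $N^{-1}(1)\subseteq S_c$. For the reverse, if some $a\in S_c$ had $N_a = n > 1$, extending $\{a\}$ to a transversal $F$ of $N^{-1}(n)/_\sim$ would produce by (A3)(a) some $t \in F$ with $t \neq a$, and accuracy would force $a \perp t$, contradicting $a \in S_c$. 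Nontriviality of $N$ then supplies an element outside $S_c$, making the containment $S_c \subsetneq S$ proper. For (ii), given $s,t \in N^{-1}(n)$ and a transversal $F$ of $N^{-1}(n)/_\sim$, both $s$ and $t$ are core-equivalent to elements $s', t' \in F$: if $s' = t'$ then $s \sim t$, while if $s' \neq t'$ accuracy gives $s' \perp t'$, and Lemma~\ref{lem:char of core eq} transports this to $s \perp t$.

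For (iii) I treat the two subcases in parallel. The key observation is that for any $r \in N^{-1}(n)$, foundation together with part~(ii) yields some $s \in F$ with $s \sim r$: any $s \in F$ with $s \Cap r$ has $N_s = N_r = n$, hence $s \sim r$ by (ii). So the assignment $F \to N^{-1}(n)/_\sim$, $s \mapsto [s]$, is always surjective. If $F$ is accurate, no two distinct elements of $F$ can be $\sim$-equivalent (that would contradict accuracy via Lemma~\ref{lem:char of core eq}), so the assignment is also injective, making $F$ a transversal. If instead $|F| = n = |N^{-1}(n)/_\sim|$, surjectivity forces bijectivity by counting.

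The real work is in (iv). The inclusion $N_r N(S) \subseteq N_sN(S) \cap N_tN(S)$ is immediate from $r \in sS \cap tS$ and multiplicativity of $N$. For the reverse, fix $m = N_s n_1 = N_t n_2$ in $N_sN(S) \cap N_tN(S)$ and a transversal $F_m$ of $N^{-1}(m)/_\sim$, which by (A3)(b) is an accurate foundation set. Foundation applied to $r$ produces $f \in F_m$ with $r \Cap f$, and since $s$ and $t$ both divide $r$, this upgrades to $s \Cap f$ and $t \Cap f$. The main idea is to promote these $\Cap$-relations to core-equivalences $sa \sim f \sim tb$ for suitable $a \in N^{-1}(n_1)$ and $b \in N^{-1}(n_2)$. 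For this, write $sS \cap fS = s\alpha S$ and use foundation at level $n_1$ to find $a \in N^{-1}(n_1)$ with $\alpha \Cap a$; left-multiplying by $s$ yields $sa \Cap s\alpha$, and since $s\alpha \in fS$ this gives $sa \Cap f$, whereupon part~(ii) combined with $N_{sa} = m = N_f$ forces $sa \sim f$. The symmetric construction produces $tb \sim f$, so $sa \sim tb$. By Lemma~\ref{lem:rLCM if core eq}, $saS \cap tbS = saqS$ for some $q \in S_c$, and $saq \in sS \cap tS = rS$ gives $saq = rh$ for some $h \in S$; applying $N$ and using $N_q = 1$ from part~(i) yields $m = N_{saq} = N_r N_h \in N_rN(S)$.

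The main obstacle is part (iv): finding the right $a,b$ witnessing $sa \sim tb$ is not immediate, and the technique is to use the foundation property at two different levels---first at $m$ to locate a single $f \in F_m$ that is simultaneously $\Cap$-related to $s$ and $t$ through $r$, then at $n_1$ and $n_2$ to factor through $f$. The other three parts, by contrast, amount to careful bookkeeping with the two defining axioms.
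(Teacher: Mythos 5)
Your proof is correct, but note that this paper does not actually prove the proposition: it is quoted verbatim from \cite{ABLS}*{Proposition~3.6} ``without proof'', so the only comparison available is with that external source rather than with an argument in the text. Taken on its own terms, your argument is sound and self-contained: parts (i)--(iii) follow exactly as you say from (A3)(a),(b) together with Lemma~\ref{lem:rLCM if core eq} and Lemma~\ref{lem:char of core eq}, and your part (iv) is the genuinely nontrivial step, handled correctly -- the passage from $r\Cap f$ to $s\Cap f$, the use of a transversal at level $n_1$ to produce $a$ with $sa\Cap f$ and $N_{sa}=m=N_f$, and the upgrade to $sa\sim f$ via (ii) all check out, as does the final step $saq\in sS\cap tS=rS$ with $N_q=1$. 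Two small points you use silently but which are fine: $m=N_sn_1\in N(S)$ because $m=N_{su}$ for any $u\in N^{-1}(n_1)$, which is what licenses the transversal $F_m$; and transitivity of $\sim$ (needed for $sa\sim tb$), which holds in any right LCM monoid by Lemma~\ref{lem:cap with core} and is taken for granted throughout the paper.
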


In particular, part (i) implies that core equivalent elements have the same image under $N$. Let us now show that the two competing notions for a generalized scale coincide: 
 
\begin{proposition}\label{prop:eq def of GS} 
 A nontrivial homomorphism $N\colon S \to \N^\times$ is a generalized scale in the sense of \cite{ABLS}*{Definition~3.6(A3)} if and only if it satisfies (i)--(iii) of Definition~\ref{def:gen scale}.
 \end{proposition}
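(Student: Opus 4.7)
The plan is to show the forward and backward implications separately, using Proposition~\ref{prop:ABLS3.6}(ii) for one direction and Lemma~\ref{lem:char of core eq} (the characterization of $\sim$ via the $\Cap$ relation) for the other. Condition (i) of Definition~\ref{def:gen scale} literally coincides with condition (a) of \cite{ABLS}*{Definition~3.6(A3)}, so the entire content of the statement concerns the equivalence of condition (b) with the conjunction of (ii) and (iii), under the standing assumption of (a).

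For the forward direction, suppose $N$ is a generalized scale in the sense of \cite{ABLS}. Then (ii) is exactly \cite{ABLS}*{Proposition~3.6(ii)}, which we have quoted as Proposition~\ref{prop:ABLS3.6}(ii). For (iii), given $s\in S$ and $n\in N(S)$, pick any transversal $F$ for $N^{-1}(n)/_\sim$ (which exists since this quotient is finite of size $n$ by (a)). By condition (b), $F$ is in particular a foundation set, so there is $t\in F\subset N^{-1}(n)$ with $s\Cap t$, giving (iii).

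For the backward direction, assume (i)--(iii) and let $F$ be a transversal for $N^{-1}(n)/_\sim$; I need to check that $F$ is an accurate foundation set. Accuracy is quick: for distinct $t,t'\in F$ we have $N_t=N_{t'}=n$, so (ii) yields $t\sim t'$ or $t\perp t'$, and the transversal property rules out $t\sim t'$. For the foundation-set property, fix $s\in S$; by (iii) there exists $r\in N^{-1}(n)$ with $s\Cap r$, and by the transversal property there is $t\in F$ with $r\sim t$. Lemma~\ref{lem:char of core eq} then transports $s\Cap r$ to $s\Cap t$, completing the verification of (b).

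The main obstacle, though quite mild, is the foundation-set step in the backward direction: one has to exchange the $t\in N^{-1}(n)$ supplied by (iii) for a representative inside the transversal $F$ without losing the $\Cap$ relation with $s$. This is where Lemma~\ref{lem:char of core eq} does the decisive work by turning $\sim$ into a statement about $\Cap$-compatibility that is preserved when passing between core equivalent elements. Everything else is essentially bookkeeping between the two formulations.
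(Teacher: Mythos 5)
Your proposal is correct and follows essentially the same route as the paper: forward via Proposition~\ref{prop:ABLS3.6}(ii) and the foundation-set property of a transversal, backward by checking accuracy via (ii) and the foundation-set property via (iii) together with Lemma~\ref{lem:char of core eq} to pass from the element supplied by (iii) to its representative in the transversal. No gaps; the argument matches the paper's proof step for step.
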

 \begin{proof}
 As (a) and (i) are identical, we need to show that (b) is equivalent to (ii) and (iii), given (i). We observe that (b) implies (iii) by the definition of a foundation set. Proposition~\ref{prop:ABLS3.6}(ii) establishes (ii). 
 
Conversely, suppose (i)--(iii) of Definition~\ref{def:gen scale} hold. We need to prove (b), so let $n \in N(S)$ and fix a transversal $F$ for $N^{-1}(n)/_\sim$. The set $F$ is finite by (i) and its elements are mutually orthogonal by (ii). Thus the claim reduces to showing that $F$ is a foundation set. For every $s \in S$, (iii) states that there is $t \in N^{-1}(n)$ with $s\Cap t$. If $t' \in F$ is the (unique) element satisfying $t'\sim t$, then Lemma~\ref{lem:char of core eq} shows $s\Cap t'$. Thus $F$ is an accurate foundation set, that is, (b) holds.
\end{proof}

\begin{corollary}\label{cor:N(S) is LCM}
For every generalized scale $N$, its image $N(S)$ is a (right) LCM subsemigroup of $\N^\times$.
\end{corollary}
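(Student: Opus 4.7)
The plan is to show that the intersection $mN(S)\cap nN(S)$ is always principal (and in fact nonempty) for $m,n\in N(S)$, by reducing the computation to a principal intersection inside $S$ via Proposition~\ref{prop:ABLS3.6}(iv). The only real obstacle is that two given preimages $s\in N^{-1}(m)$, $t\in N^{-1}(n)$ may happen to be orthogonal in $S$, in which case Proposition~\ref{prop:ABLS3.6}(iv) does not directly apply to them. This is precisely what condition~(iii) of Definition~\ref{def:gen scale} is designed to overcome.

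First, I would check the elementary point that $N(S)$ is a (cancellative, commutative) submonoid of $\N^\times$: by Proposition~\ref{prop:ABLS3.6}(i) we have $1=N_e\in N(S)$ since $e\in S_c$, while closure under multiplication is immediate from $N$ being a homomorphism. Cancellation in $N(S)$ is inherited from $\N^\times$.

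For the right LCM property, fix $m,n\in N(S)$ and choose any $s\in N^{-1}(m)$. By condition~(iii) of Definition~\ref{def:gen scale}, there exists $t\in N^{-1}(n)$ with $s\Cap t$. Since $S$ is right LCM we may write $sS\cap tS=pS$ for some $p\in S$, and Proposition~\ref{prop:ABLS3.6}(iv) then yields
\[
mN(S)\cap nN(S)\;=\;N_sN(S)\cap N_tN(S)\;=\;N_pN(S).
\]
This exhibits the intersection as the principal right ideal of $N(S)$ generated by $N_p$. In particular it is nonempty (it contains $N_p$), so $N(S)$ satisfies the right LCM condition, which here coincides with the (two-sided) LCM condition because $\N^\times$ is commutative.

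The only delicate point, and what makes the statement nontrivial, is the replacement of an arbitrary preimage of $n$ by one that meets $s$ non-orthogonally; everything else is a direct application of the structural properties of generalized scales recorded in Proposition~\ref{prop:ABLS3.6}.
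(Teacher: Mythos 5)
Your proposal is correct and follows exactly the paper's own argument: pick $s\in N^{-1}(m)$, use Definition~\ref{def:gen scale}(iii) to find $t\in N^{-1}(n)$ with $s\Cap t$, and apply Proposition~\ref{prop:ABLS3.6}(iv) to the right LCM $sS\cap tS=pS$ to conclude $mN(S)\cap nN(S)=N_pN(S)$. No further comment is needed.
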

\begin{proof}
Let $m,n \in N(S)$. So there is $s \in N^{-1}(m)$, and Definition~\ref{def:gen scale}~(iii) asserts that there is $t \in N^{-1}(n)$ with $s\Cap t$, that is, $sS\cap tS = rS$ for some $r\in S$. Using Proposition~\ref{prop:ABLS3.6}~(iv), we thus get $mN(S) \cap nN(S) = N_rN(S)$, so that $N_r$ is the (unique right) LCM of $m$ and $n$.
\end{proof}

Finally, let us recall the part of \cite{ABLS}*{Lemma~3.9} that holds without assumptions on \emph{core irreducible} elements in $S$, along with its proof.

\begin{lemma}\label{lem:thebijectionsg}
Let $S$ be a right LCM monoid. Then left multiplication defines an action $\alpha\colon S_c \curvearrowright S/_\sim$ by bijections $\alpha_a([s]) = [as]$. Every generalized scale $N$ on $S$ is invariant under this action.
\end{lemma}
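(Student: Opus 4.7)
The plan is to verify the four things in sequence: well-definedness of $\alpha_a$ on equivalence classes, the semigroup action identities, bijectivity of each $\alpha_a$, and invariance of $N$.

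For well-definedness, I would take $s \sim s'$ and, by Lemma~\ref{lem:rLCM if core eq}, pick $b, c \in S_c$ with $sb = s'c$. Left-multiplying by $a$ yields $(as)b = (as')c$ with $b, c \in S_c$, so $as \sim as'$, meaning $\alpha_a([s])$ depends only on $[s]$. The action identities $\alpha_a \circ \alpha_{a'} = \alpha_{aa'}$ and $\alpha_e = \id$ are immediate from associativity and the unit property, and they also automatically give that each $\alpha_a$ has a right and left inverse if I can show bijectivity.

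For injectivity of $\alpha_a$: if $as \sim at$, then $asb = atc$ for some $b, c \in S_c$, and left cancellation (which holds in any right LCM monoid) strips off $a$, leaving $sb = tc$ with $b, c \in S_c$, so $s \sim t$. For surjectivity, the key input is that $a \in S_c$ forces $aS \cap sS \neq \emptyset$ for every $s \in S$, so by the right LCM property we can write $aS \cap sS = sbS$ for some $b \in S$, and then Lemma~\ref{lem:cap with core} pins down $b \in S_c$. Writing $sb = at$ for the witnessing $t \in S$, we have $[at] = [sb] = [s]$, where the last equality uses that $b \in S_c$ yields $s \cdot b = sb \cdot e$ with $b, e \in S_c$. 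Thus $\alpha_a([t]) = [s]$, proving surjectivity.

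Finally, invariance of $N$ under $\alpha$ reduces to $N_a = 1$ for every $a \in S_c$, which is exactly Proposition~\ref{prop:ABLS3.6}~(i); multiplicativity of $N$ then gives $N_{as} = N_a N_s = N_s$, i.e.\ $N$ is constant on each $\alpha$-orbit. I do not expect any real obstacle here: the one subtle ingredient is invoking Lemma~\ref{lem:cap with core} for surjectivity, which is precisely where the hypothesis $a \in S_c$ (rather than just $a \in S$) becomes indispensable.
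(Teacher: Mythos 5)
Your proof is correct and follows essentially the same route as the paper: well-definedness via left multiplication preserving core equivalence, injectivity via left cancellation, surjectivity via Lemma~\ref{lem:cap with core} together with $a\in S_c$ guaranteeing $aS\cap sS\neq\emptyset$, and invariance of $N$ from Proposition~\ref{prop:ABLS3.6}~(i). The only cosmetic remark is that producing $b,c\in S_c$ with $sb=s'c$ from $s\sim s'$ is just the definition of core equivalence, so the appeal to Lemma~\ref{lem:rLCM if core eq} there is unnecessary (though harmless).
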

\begin{proof}
For every $a \in S_c$, the map $\alpha_a$ is well-defined as left multiplication preserves the core equivalence relation. If $as \sim at$ for some $s,t \in S$, then $s \sim t$ by left cancellation. Hence $\alpha_a$ is injective. On the other hand, Lemma~\ref{lem:cap with core} states that for $s \in S$ we have $aS\cap sS = atS, at=sb$ for some $b \in S_c$ and $t \in S$. Thus $\alpha_a([t]) = [at] = [s]$, and we conclude that $\alpha_a$ is a bijection. Generalized scales are invariant under this action by Proposition~\ref{prop:ABLS3.6}~(i).
\end{proof}

\section{Uniqueness of the generalized scale}\label{sec:EaU}
Let us start with a few observations that will streamline the proof of the uniqueness theorem. For an LCM subsemigroup $\CN$ of $\N^\times$, let us denote its irreducible elements by 
\[ \text{Irr}~\CN := \{ n \in \CN\setminus \{1\} \mid n=k\ell \text{ for } k,\ell \in \CN \Rightarrow k=1 \text{ or } \ell=1\}. \] 

\begin{proposition}\label{prop:A3 gives A4}
Every LCM subsemigroup $\CN$ of $\N^\times$ is freely generated by its irreducible elements. In particular, $N(S)$ is freely generated by $\text{Irr}~N(S)$ for every generalized scale $N\colon S \to \N^\times$, that is, condition (A3) of \cite{ABLS}*{Definition~3.1} implies (A4). 
\end{proposition}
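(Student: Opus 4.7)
The ``in particular'' clause follows immediately from Corollary~\ref{cor:N(S) is LCM}, so the plan is to prove the first claim: every LCM submonoid $\CN \subset \N^\times$ is freely generated by $\Irr\,\CN$. The natural size function is $v(n) := \sum_\pi v_\pi(n)$, the total prime multiplicity of $n$ in $\N$, and I would proceed by induction on $v(n)$.

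First I would record a preliminary observation: for $a, b \in \CN$ with $a \mid b$ in $\N$, one actually has $a \mid b$ in $\CN$. Indeed, $b \in a\CN \cap b\CN$, so this intersection equals $r\CN$ for some $r = ac \in \CN$, and then $b \in a\CN$. This transfer of divisibility into $\CN$ will be used silently throughout. Existence of an irreducible factorization of every $n \in \CN \setminus \{1\}$ is then routine: any nontrivial factorization $n = k\ell$ in $\CN$ strictly decreases $v$, so the inductive hypothesis applies.

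For uniqueness, given two factorizations $n = p_1 \cdots p_k = q_1 \cdots q_\ell$ into irreducibles, the case $p_1 = q_1$ reduces to the inductive hypothesis by cancellation. Otherwise, the LCM property produces $r \in \CN$ with $p_1\CN \cap q_1\CN = r\CN$, and I would split on whether $r$ is a proper divisor of $n$ or $r = n$. In the generic case $v(r) < v(n)$, the inductive hypothesis gives a unique irreducible factorization of $r$; writing $r = p_1 a = q_1 b$ with $a, b \in \CN \setminus \{1\}$ (as $a = 1$ would force $p_1 = q_1$ via irreducibility of $p_1$), the uniqueness forces $q_1$ to appear among the irreducible factors of $a$. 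Since $a$ divides $p_2 \cdots p_k$ in $\CN$, so does $q_1$; a second application of the inductive hypothesis identifies $q_1$ with some $p_j$, after which cancellation closes the induction.

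The main obstacle is the degenerate case $r = n$, in which the inductive hypothesis cannot be applied to $r$ directly. The plan is to use the LCM property in the dual direction: $p_1 q_1$ is a common multiple of $p_1$ and $q_1$ lying in $\CN$, hence in $n\CN$, so $a \mid q_1$ in $\CN$ where $n = p_1 a$. Irreducibility of $q_1$ gives $a \in \{1, q_1\}$; the possibility $a = 1$ again forces $p_1 = q_1$ by irreducibility of $p_1$, contradicting the case hypothesis, while $a = q_1$ forces $k = \ell = 2$ with $\{p_1,p_2\}=\{q_1,q_2\}$. Keeping this boundary case in view, and recognizing that the LCM of two distinct irreducibles can coincide with $n$, is where I expect to concentrate the most attention.
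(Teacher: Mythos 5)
Your overall strategy---induction on the total prime multiplicity $v(n)$, with uniqueness handled by comparing the two leading irreducibles $p_1\neq q_1$ through their LCM $r$ in $\CN$ and splitting on $r\neq n$ versus $r=n$---is a genuinely different route from the paper's (the paper first proves $m\CN\cap n\CN=mn\CN$ for distinct $m,n\in\Irr\CN$, strips common factors from the two factorizations, and gets a contradiction by iterating that identity across the whole product; no size induction is needed for uniqueness). But your ``preliminary observation'' is false: for $a,b\in\CN$, $a\mid b$ in $\N$ does \emph{not} imply $b\in a\CN$. Take $\CN=\langle 2,6\rangle$. Since $2$ and $6$ are multiplicatively independent, $2^i6^j\in 2^{i_1}6^{j_1}\CN$ exactly when $i\geq i_1$ and $j\geq j_1$, so ideal intersections are principal (given by componentwise maxima of exponents) and $\CN$ is an LCM subsemigroup of $\N^\times$, freely generated by $\Irr\CN=\{2,6\}$; yet $2\mid 6$ in $\N$ while $6\notin 2\CN$ because $3\notin\CN$. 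The justification you offer is circular: asserting $b\in a\CN\cap b\CN$ presupposes exactly the conclusion $b\in a\CN$.

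Fortunately, despite your announcement that this transfer of divisibility is ``used silently throughout,'' none of the subsequent steps actually needs it, so the defect is repairable rather than fatal: every divisibility in $\CN$ you invoke follows from membership in the relevant principal ideals plus cancellation. Concretely, $r\mid n$ in $\CN$ because $n\in p_1\CN\cap q_1\CN=r\CN$; writing $n=rc=p_1ac$ and cancelling $p_1$ gives $a\mid p_2\cdots p_k$ in $\CN$ (this is the one step whose justification would otherwise have rested on the false observation); and in the degenerate case $r=n$, the membership $p_1q_1\in n\CN$ gives $q_1=(p_2\cdots p_k)c$ after cancelling $p_1$, exactly as you use it. With those direct justifications substituted, your induction closes as described, including the boundary case $a=q_1$, where the two factorizations simply coincide up to transposing $p_1$ and $q_1$. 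So: delete the preliminary observation, justify those divisibility statements from ideal membership, and the argument is correct.
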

\begin{proof}
First, we claim that $m\CN \cap n\CN = mn\CN$ whenever $m,n \in \text{Irr}~\CN$ are distinct. There are unique $k,\ell \in \CN$ with $m\CN\cap n\CN = mk\CN$ and $n = k\ell$ because $mn \in mk\CN$. As $m\neq n$ forces $k>1$ and $\CN$ is cancellative, irreducibility of $n$ forces $\ell=1$, that is, $k=n$. 

For the general case suppose to the contrary that there were $k,\ell \geq 1$ and 
\[\quad m_1,\ldots,m_k,n_1,\ldots,n_\ell \in \text{Irr}~\CN \text{ with } m_1\cdots m_k = n_1\cdots n_\ell,\] 
but there did not exist a bijection $f\colon \{1,\ldots,k\} \to \{1,\ldots,\ell\}$ such that $m_{f(j)} = n_j$ for all $1\leq j\leq k$. By counting multiplicities of the elements in $\{m_j \mid 1\leq j\leq k\} \cap \{n_j \mid 1\leq j\leq \ell\}$, we can choose subsets $K \subset \{1,\ldots,k\}$ and $L \subset \{1,\ldots,\ell\}$ and build a bijection $f'\colon K \to L$ with $m_{f(j)} = n_j$ for all $j \in K$. By choosing $K$ maximal with the above property and removing it from $\{1,\ldots,k\}$ (and the corresponding $L$ from $\{1,\ldots,\ell\}$), the problem reduces to the following situation: There exist $k',\ell' \geq 1$ ($k=\ell$ and $K=\{1,\ldots,k\}$ cannot occur) and 
\[m_1,\ldots,m_{k'},n_1,\ldots,n_{\ell'} \in \text{Irr}~\CN \text{ with }
m_1\cdots m_{k'} = n_1\cdots n_{\ell'} \text{ and } m_i \neq n_j \text{ for all } i,j.\] 
In view of the first part of the proof, $n_1 \neq m_j$ for all $j$ gives
\[\begin{array}{lclcl}
n_1\CN \cap m_1\cdots m_{k'}\CN &=& n_1\CN \cap m_1\CN \cap m_1\cdots m_{k'}\CN \\
&=& m_1n_1\CN \cap m_1m_2\CN \cap m_1\cdots m_{k'}\CN \\
&=& m_1(n_1\CN\cap m_2\CN) \cap m_1\cdots m_{k'}\CN \\
&\vdots&\\
&=& m_1\cdots m_{k'}n_1\CN.
\end{array}\] 
As $n_1>1$, this contradicts 
\[n_1\CN \cap m_1\cdots m_{k'}\CN \supset n_1\cdots n_{\ell'}\CN \cap m_1\cdots m_{k'}\CN = m_1\cdots m_{k'}\CN.\] 
Hence we conclude that $\text{Irr}~\CN$ freely generates a submonoid $M$ of $\CN$. To see that $M=\CN$, we show by induction on $n\in \CN$ that every element in $\N$ factors into a finite product of elements from $\text{Irr}~\CN$: Suppose $\CN\neq \{1\}$ and pick the smallest $n \in \CN\setminus\{1\}$ with respect to the total order $\leq_\N$ coming from the canonical inclusion $\CN\subset \N$. Then $n \in \text{Irr}~\CN$ as $n=mm'$ forces $m,m' \in \{1,n\}$. Next, assume that a factorization with respect to $\text{Irr}~\CN$ exists for all $m\in \CN, m\leq_\N n$ for an arbitrary but fixed $n \in \CN$. Let $n' \in \CN$ be the smallest number with $n'>_\N n$. If $n' \in \text{Irr}~\CN$, then there is nothing to prove. On the other hand, if $n'=mm'$ for some $m,m' \in \CN\setminus \{1\}$, then $m,m' \leq_\N n$ since $n'$ was chosen to be minimal with $n'>_\N n$. Therefore, both $m$ and $m'$ admit a factorization with respect to $\text{Irr}~\CN$, and the product of these is a factorization for $n'$. This completes the induction and we conclude that $\CN$ is freely generated by $\text{Irr}~\CN$. 

The claim concerning a generalized scale $N$ now follows from Corollary~\ref{cor:N(S) is LCM}.
\end{proof}

\begin{remark}\label{rem:alt proof by referee}
The following elegant alternative for the proof of Proposition~\ref{prop:A3 gives A4} was suggested by the anonymous referee: It is known that an abelian cancellative monoid is free if and only if it
\begin{enumerate}[(a)]
\item has trivial group of units;
\item satisfies the ascending chain condition on principal ideals; and
\item each irreducible element is prime.
\end{enumerate}
In the present case of an LCM subsemigroup $\CN\subset \N^\times$, (a) and (b) are inherited from $\N^\times$. These two conditions imply that every element admits a factorization into a finite product of irreducibles. 

In order to establish (c), it suffices to show that if $p \in \text{Irr}~\CN$ and $q_1^{m_1}\cdots q_k^{m_k} \in p\CN$ for some mutually distinct $q_i \in \text{Irr}~\CN$ and $m_i\geq 1$ for all $i$, then $q_i \in p\CN$ for some $i$, that is, $p=q_i$ as (a) holds and $q_i$ is irreducible. Suppose that $p \neq q_i$ for all $i$ and that $k$ and $\sum_{1\leq i \leq k} m_i$ are minimal, that is, $q_1^{m'_1}\cdots q_\ell^{m'_\ell} \in p\CN$ implies $\ell\geq k$ and $\sum_{1\leq i \leq \ell} m'_i \geq \sum_{1\leq i \leq k} m_i$. Combining the first claim in the proof of Proposition~\ref{prop:A3 gives A4} with left cancellation, we obtain
\[\begin{array}{lcl}
p\CN \cap q_1\CN \cap \ldots \cap q_k\CN &=& pq_1\CN\cap pq_2\CN \cap \ldots \cap pq_k\CN \\
&=& p(q_1\CN\cap q_\CN\cap \ldots\cap q_k\CN)\\
&\vdots&\\
&=& pq_1q_2\cdots q_k\CN.
\end{array}\]
Thus we get 
\[q_1^{m_1}\cdots q_k^{m_k} \in p\CN \cap q_1\CN \cap \ldots \cap q_k\CN = pq_1q_2\cdots q_k\CN,\]
which implies $q_1^{m_1-1}\cdots q_k^{m_k-1} \in p\CN$ by left cancellation. This contradicts minimality of $(m_1,\ldots,m_k)$. 
\end{remark}

\begin{lemma}\label{lem:irr maps to irr}
Let $S$ be a right LCM monoid, $M,N\colon S \to \N^\times$ be two generalized scales on $S$, and $s \in S\setminus S_c$. If $M_s$ is reducible in $M(S)$, then there are $a\in S_c$ and $t,r \in S\setminus S_c$ such that $sa=tr$. In particular, $N_s$ is irreducible if and only if $M_s$ is irreducible. 
\end{lemma}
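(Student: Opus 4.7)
The plan is to convert the hypothetical factorization $M_s = mn$ (with $m,n \in M(S) \setminus \{1\}$) into an actual equation $sa = tr$ in $S$ by building a right LCM of $s$ with a suitable element whose $M$-value equals $m$.

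First, I would invoke condition (iii) of Definition~\ref{def:gen scale} applied to $s$ and $m \in M(S)$ to produce $t \in M^{-1}(m)$ with $s \Cap t$. Since $M^{-1}(1) = S_c$ by Proposition~\ref{prop:ABLS3.6}~(i) and $m \neq 1$, such $t$ automatically lies outside $S_c$. Let $u \in S$ satisfy $sS \cap tS = uS$. By Proposition~\ref{prop:ABLS3.6}~(iv), the principal ideal $M_u M(S)$ equals $M_s M(S) \cap m M(S)$, which collapses to $M_s M(S)$ because $M_s = mn \in m M(S)$. Cancellation in $\N^\times$ then forces $M_u = M_s$.

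Writing $u = sa = tr$, one reads off $M_a = M_u / M_s = 1$ and $M_r = M_u / M_t = n$. Proposition~\ref{prop:ABLS3.6}~(i) yields $a \in S_c$, while $M_r = n \neq 1$ and $M_t = m \neq 1$ ensure that $r, t \notin S_c$. This delivers the first claim.

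For the ``in particular'' statement, apply $N$ to the equation $sa = tr$ from the first part. Because $N_a = 1$, this reduces to $N_s = N_t N_r$; since $t, r \notin S_c$ the factors $N_t, N_r$ lie in $N(S) \setminus \{1\}$, witnessing reducibility of $N_s$ in $N(S)$. The argument is symmetric in $M$ and $N$, giving the reverse implication. No serious obstacle arises here: everything rests on Proposition~\ref{prop:ABLS3.6}~(iv), which translates the arithmetic factorization $M_s = mn$ into a geometric LCM relation in $S$ with matching $M$-value, after which identifying $a$ as a core element and $r, t$ as noncore is automatic.
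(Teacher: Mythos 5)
Your proposal is correct and follows essentially the same route as the paper: use Definition~\ref{def:gen scale}~(iii) to produce $t\in M^{-1}(m)$ with $s\Cap t$, then apply Proposition~\ref{prop:ABLS3.6}~(iv) (together with (i)) to the right LCM $sa=tr$ to see $M_a=1$, $M_r=n>1$, and finally push the identity $sa=tr$ through $N$. The only difference is cosmetic — you first identify $M_u=M_s$ for the LCM $u$ and then split off $a$ and $r$, whereas the paper parametrizes the LCM as $tr=sa$ from the start.
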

\begin{proof}
Suppose there are $k,\ell \in M(S), k,\ell > 1$ such that $M_s=k\ell$. By Definition~\ref{def:gen scale}~(iii), there is $t \in M^{-1}(k)$ such that $t\Cap s$, say $tS\cap sS = trS$ with $tr=sa$ for some $r,a \in S$. Due to Proposition~\ref{prop:ABLS3.6}~(iv) and $M_s=M_t\ell$, we know that $M_r=\ell>1$ and $M_a=1$. This implies $r \in S\setminus S_c$ and $a \in S_c$, see Proposition~\ref{prop:ABLS3.6}~(i). We conclude from this that $N_s=N_{sa} = N_tN_r$ with $N_t,N_r >1$, so that $N_s$ is reducible in $N(S)$.
\end{proof}

Recall from \cite{BS1}*{Definition~1.2 and Definition~2.1} that a finite subset $\CT\subset S$ is an \emph{accurate foundation set} if its elements are mutually orthogonal and for every $s \in S$ there is $t \in \CT$ with $s\Cap t$.

\begin{lemma}\label{lem:s Cap t for i(s) neq i(t)}
Let $N$ be a generalized scale on $S$. If $s,t \in S$ with $N_s,N_t \in \text{Irr}~N(S)$ satisfy $N_s\neq N_t$, then $s\Cap t$. 
\end{lemma}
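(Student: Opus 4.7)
The plan is to establish a stronger assertion---that $s' \Cap t'$ for every $s' \in N^{-1}(N_s)$ and every $t' \in N^{-1}(N_t)$---and recover the lemma as the special case $s'=s$, $t'=t$. The strategy is to double-count the quotient $N^{-1}(N_s N_t)/_\sim$ through two compatibility-induced partitions, each indexed by one of the two irreducible fibers $T_s := N^{-1}(N_s)/_\sim$ and $T_t := N^{-1}(N_t)/_\sim$.

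First I would isolate an arithmetic input: the first claim in the proof of Proposition~\ref{prop:A3 gives A4} gives $m N(S) \cap n N(S) = mn\,N(S)$ for distinct irreducibles $m, n \in N(S)$. Combined with Proposition~\ref{prop:ABLS3.6}(iv), whenever $r \in N^{-1}(N_s N_t)$ and $s' \in N^{-1}(N_s)$ satisfy $r \Cap s'$, the right LCM $rS \cap s'S = rcS$ forces $N_{rc} = N_s N_t = N_r$, so $c \in S_c$ by Proposition~\ref{prop:ABLS3.6}(i), and $r \sim s'd$ for some $d \in N^{-1}(N_t)$. Setting $R_{s'} := \{[r] \in N^{-1}(N_s N_t)/_\sim \mid r \Cap s'\}$, which depends only on $[s']$ by Lemma~\ref{lem:char of core eq}, for each $s' \in N^{-1}(N_s)$, this yields a bijection $[d] \mapsto [s'd]$ from $N^{-1}(N_t)/_\sim$ onto $R_{s'}$---well-defined by left multiplication, injective by left cancellation, surjective by what was just said---so $|R_{s'}| = N_t$.

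Next, the family $\{R_{s'}\}_{[s'] \in T_s}$ partitions $N^{-1}(N_s N_t)/_\sim$: Definition~\ref{def:gen scale}(iii) places every class in at least one $R_{s'}$, while Definition~\ref{def:gen scale}(ii) together with the irreducibility of $N_s$ delivers disjointness---if $[r] \in R_{s'_1} \cap R_{s'_2}$, then $s'_1 d_1 \sim r \sim s'_2 d_2$ gives $s'_1 \Cap s'_2$, hence $s'_1 \sim s'_2$. Symmetrically, $\{R_{t'}\}_{[t'] \in T_t}$ is a partition. For any pair $([s'],[t']) \in T_s \times T_t$, the intersection $R_{s'} \cap R_{t'}$ is empty if $s' \perp t'$ and the singleton $\{[s'a']\}$ if $s' \Cap t'$, where $s'S \cap t'S = s'a'S$.

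Summing $|R_{s'} \cap R_{t'}|$ over $[t'] \in T_t$ against the second partition then yields
\[
|\{[t'] \in T_t \mid s' \Cap t'\}| \;=\; \sum_{[t'] \in T_t} |R_{s'} \cap R_{t'}| \;=\; |R_{s'}| \;=\; N_t \;=\; |T_t|
\]
by Definition~\ref{def:gen scale}(i), which forces $s' \Cap t'$ for every $[t'] \in T_t$; the case $[s']=[s]$, $[t']=[t]$ is the lemma. The main obstacle I foresee is the uniqueness half $|R_{s'} \cap R_{t'}| \leq 1$ when $s' \Cap t'$: given $[r] \in R_{s'} \cap R_{t'}$ with $r \sim s'd$ and $r \sim t'e$, the equivalence $s'd \sim t'e$ gives $s'dc_1 = t'ec_2 \in s'S \cap t'S = s'a'S$, and after left-cancelling $s'$ (and using Proposition~\ref{prop:ABLS3.6}(i) to conclude the cofactor lies in $S_c$), one obtains $d \sim a'$, so $[r] = [s'd] = [s'a']$. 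The surrounding bookkeeping, including the well-definedness of $[d] \mapsto [s'd]$ on equivalence classes, relies on Lemma~\ref{lem:char of core eq} and Lemma~\ref{lem:rLCM if core eq} to move between representatives.
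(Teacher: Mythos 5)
Your argument is correct. It shares the same counting heart as the paper's proof---both exploit that $\lvert N^{-1}(N_sN_t)/_\sim\rvert = N_sN_t$ is exactly the number of pairs coming from the two fibers, so every pair must intersect---but the implementation is genuinely different. The paper routes through the accurate-foundation-set formulation: it takes transversals $F_s, F_t$, forms a minimal complete set of representatives $F$ of the nonempty pairwise intersections, and lets Proposition~\ref{prop:ABLS3.6}(iii) do the heavy lifting by identifying $F$ as a transversal for $N^{-1}(N_sN_t)/_\sim$, of cardinality $N_sN_t$. You instead never invoke foundation sets or Proposition~\ref{prop:ABLS3.6}(iii): you partition the whole fiber $N^{-1}(N_sN_t)/_\sim$ by the sets $R_{t'}$ (covering from Definition~\ref{def:gen scale}(iii), disjointness from (ii)), exhibit an explicit bijection $[d]\mapsto [s'd]$ showing $\lvert R_{s'}\rvert = N_t$, and pin down $\lvert R_{s'}\cap R_{t'}\rvert\le 1$ by hand; this makes the fibered structure of $N^{-1}(N_sN_t)/_\sim$ over the two smaller fibers explicit and delivers the stronger statement (all pairs $s'\in N^{-1}(N_s)$, $t'\in N^{-1}(N_t)$ intersect) directly, whereas the paper gets that same strengthening only implicitly via Lemma~\ref{lem:char of core eq}. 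What the paper's route buys is brevity, since Proposition~\ref{prop:ABLS3.6}(iii) packages most of your bookkeeping; what yours buys is self-containment relative to Definition~\ref{def:gen scale}(i)--(iii) together with Proposition~\ref{prop:ABLS3.6}(i),(iv). Two small remarks: disjointness of the $R_{s'}$ needs only Definition~\ref{def:gen scale}(ii), not irreducibility; the hypothesis that $N_s\neq N_t$ are irreducible enters precisely where you isolated it, namely through $N_sN(S)\cap N_tN(S)=N_sN_tN(S)$, which is what makes $N_{s'a'}=N_sN_t$ (hence $N_{a'}=N_t$) and so both the membership $[s'a']\in N^{-1}(N_sN_t)/_\sim$ and your uniqueness step go through.
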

\begin{proof}
According to Proposition~\ref{prop:A3 gives A4}, the right LCM of $N_s$ and $N_t$ in $N(S)$ is $N_sN_t$. So if $s' \in N^{-1}(N_s)$ and $t' \in N^{-1}(N_t)$ satisfy $s'S\cap t'S=rS$ for some $r\in S$, then $N_r=N_sN_t$ by Proposition~\ref{prop:ABLS3.6}~(iv).

Next, take transversals $F_r$ for $N^{-1}(N_r)/_\sim$ with $r \in F_r$ for $r=s,t$. By \cite{ABLS}*{Definition~3.1(A3)(b)}, $F_s$ and $F_t$ are accurate foundation sets with $\lvert F_r\rvert =N_r$ for $r=s,t$. Hence, every complete minimal set of representatives $F$ for 
\[ \{s'S\cap t'S \mid s'\in F_s, t'\in F_t\}\setminus \{\emptyset\}\]
is also an accurate foundation set. By the first paragraph, we have $F \subset N^{-1}(N_sN_t)$. According to Proposition~\ref{prop:ABLS3.6}~(iii), $F$ is a transversal for $N^{-1}(N_sN_t)/_\sim$, and thus $\lvert F\rvert =N_sN_t$. In view of $\lvert F_s\rvert = N_s$ and $\lvert F_t\rvert = N_t$, this forces $s\Cap t$.
\end{proof}

\begin{lemma}\label{lem:equiv prod of nc irreds}
Let $N$ be a generalized scale on the right LCM monoid $S$ and $s \in S\setminus S_c$. Then there are $k\geq 1$ and $s_1,\ldots,s_k \in S$ with $N_{s_j} \in \text{Irr}~N(S)$ for all $j$ satisfying $s\sim s_1\cdots s_k$.
\end{lemma}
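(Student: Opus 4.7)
The plan is to induct on the length $k$ of the factorization of $N_s$ into irreducibles of $N(S)$. Since $s \notin S_c$, Proposition~\ref{prop:ABLS3.6}~(i) gives $N_s \neq 1$, and by Proposition~\ref{prop:A3 gives A4} we can write $N_s = p_1\cdots p_k$ with $p_j \in \text{Irr}~N(S)$ and $k\geq 1$. The base case $k=1$ is trivial: take $s_1 := s$.

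For the inductive step, I would apply Definition~\ref{def:gen scale}~(iii) to $s$ and the value $p_1 \in N(S)$ to obtain some $s_1 \in N^{-1}(p_1)$ with $s \Cap s_1$. Write $sS \cap s_1 S = uS$ with $u = sa = s_1 r$ for suitable $a, r \in S$. By Proposition~\ref{prop:ABLS3.6}~(iv), $N_u N(S) = N_s N(S) \cap N_{s_1} N(S)$; since $N_{s_1}=p_1$ divides $N_s$ in the free monoid $N(S)$, this intersection is $N_s N(S)$, and as $\N^\times$ has trivial unit group we deduce $N_u = N_s$. From $N_{sa} = N_s$ we get $N_a = 1$, hence $a \in S_c$ by Proposition~\ref{prop:ABLS3.6}~(i). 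From $N_{s_1 r} = N_s = p_1\cdots p_k$ and free factorization in $N(S)$, we obtain $N_r = p_2\cdots p_k$.

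If $k=1$, then $N_r = 1$, so $r \in S_c$, and $sa = s_1 r$ with $a,r \in S_c$ gives $s \sim s_1$. If $k\geq 2$, then $N_r \neq 1$ so $r \in S\setminus S_c$, and its irreducible factorization in $N(S)$ has length $k-1$. By the inductive hypothesis there are $s_2,\ldots,s_k \in S$ with $N_{s_j} \in \text{Irr}~N(S)$ and $r \sim s_2\cdots s_k$, say $rb = s_2\cdots s_k c$ for some $b,c \in S_c$. Combining these identities yields
\[ s(ab) = (sa)b = (s_1 r) b = s_1(rb) = s_1 s_2\cdots s_k c, \]
and since $ab, c \in S_c$ (the core being a submonoid), this is precisely $s \sim s_1 s_2\cdots s_k$, closing the induction.

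The main subtlety to watch is the transition from $sS\cap s_1 S = uS$ back to the additive information about $N$, which must be done through Proposition~\ref{prop:ABLS3.6}~(iv) combined with the free-monoid structure from Proposition~\ref{prop:A3 gives A4}; this is what forces the ``overhead'' $a$ produced when intersecting to lie in $S_c$, and without that fact the argument would only yield $s a \sim s_1 s_2\cdots s_k$ rather than $s \sim s_1 s_2\cdots s_k$.
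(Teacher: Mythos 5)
Your proof is correct and follows essentially the same route as the paper: the single peeling step (use Definition~\ref{def:gen scale}~(iii) to find $s_1\in N^{-1}(p_1)$ meeting $s$, then Proposition~\ref{prop:ABLS3.6}~(iv) with free factorization to force the overhead into $S_c$ and $N_r=p_2\cdots p_k$) is exactly the paper's argument, which simply unrolls your induction into an explicit $(k-1)$-step iteration. No gaps.
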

\begin{proof}
Let $N_s=n_1\cdots n_k$ with $n_i \in \text{Irr}~N(S)$ for all $i$ be the (up to permutation unique) factorization into irreducibles in $N(S)\subset \N^\times$. By Definition~\ref{def:gen scale}(iii), there is  $s_1 \in N^{-1}(n_1)$ with $s_1\Cap s$, say $s_1S\cap sS=s_1r_1S, s_1r_1=sa_1$ for some $a_1,r_1\in S$. Since $N_{s_1}$ is a divisor of $N_s$, Proposition~\ref{prop:ABLS3.6}~(iv) implies that $N_{r_1}=n_2\cdots n_k$ and $N_{a_1}=1$. The latter is equivalent to $a_1\in S_c$, see Proposition~\ref{prop:ABLS3.6}~(i). Thus we obtained $s_1 \in N^{-1}(n_1),a_1 \in S_c$, and $r_1 \in N^{-1}(n_2\cdots n_k)$ such that 
\[sS\cap s_1S = s_1r_1S, s_1r_1=sa_1.\] 
Applying the same process to $(r_1,n_2)$ in place of $(s,n_1)$ yields $s_2\in  N^{-1}(n_2),a_2 \in S_c$, and $r_2 \in N^{-1}(n_3\cdots n_k)$ such that $r_1S\cap s_2S = s_2r_2S, s_2r_2=r_1a_2$. After $k-1$ steps, we have obtained $s_i \in N^{-1}(n_i), a_i \in S_c$ and $r_i \in N^{-1}(\prod_{i+1\leq j\leq k}n_j)$ for $1\leq i \leq k-1$ with
\[sa_1\cdots a_{k-1} = s_1\cdots s_{k-1}r_{k-1} \quad \text{with } r_{k-1} \in N^{-1}(n_k).\]
Thus, setting $s_k:= r_{k-1}$ gives $s\sim s_1\cdots s_k$.
\end{proof}

\begin{thm}\label{thm:uniqueness}
A right LCM monoid admits at most one generalized scale. 
\end{thm}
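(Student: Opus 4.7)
The plan is to leverage the preceding lemmas to show that any two generalized scales $M$ and $N$ on $S$ must agree. By Lemma~\ref{lem:equiv prod of nc irreds} every element of $S\setminus S_c$ is core equivalent to a product of ``noncore irreducibles'' (elements mapped to $\text{Irr}~N(S)$, equivalently, to $\text{Irr}~M(S)$ by Lemma~\ref{lem:irr maps to irr}), and both $M$ and $N$ are multiplicative homomorphisms that descend through $\sim$ (by Proposition~\ref{prop:ABLS3.6}(i)) and vanish identically on $S_c$. Therefore it suffices to prove that $M_s=N_s$ for every $s$ with $N_s\in \text{Irr}~N(S)$.

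The first step is to show that for $s,t$ noncore irreducible, the equality $N_s=N_t$ is equivalent to $M_s=M_t$. To see this, suppose $N_s=N_t$ but $M_s\neq M_t$. By Definition~\ref{def:gen scale}(ii) applied to $N$, we must have $s\sim t$ or $s\perp t$. The first option is excluded because $M$ is core equivalence invariant and would force $M_s=M_t$. The second option is excluded by Lemma~\ref{lem:s Cap t for i(s) neq i(t)} applied to $M$, since $M_s$ and $M_t$ are distinct irreducibles in $M(S)$. By symmetry we obtain a well-defined bijection $\varphi\colon \text{Irr}~N(S)\to\text{Irr}~M(S)$ characterized by $M_s=\varphi(N_s)$ whenever $N_s$ is irreducible.

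To pin down $\varphi$, I appeal to Definition~\ref{def:gen scale}(i): for $n\in\text{Irr}~N(S)$, the fiber $N^{-1}(n)/_\sim$ has exactly $n$ elements, but under the identification established above this very same subset of $S/_\sim$ equals $M^{-1}(\varphi(n))/_\sim$, which has exactly $\varphi(n)$ elements. Hence $\varphi(n)=n$, so $M$ and $N$ agree on all noncore irreducibles, and then by multiplicativity and Lemma~\ref{lem:equiv prod of nc irreds} they agree on all of $S\setminus S_c$; as both vanish on $S_c$, one concludes $M=N$.

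The main obstacle in the argument is the second step: we need both Definition~\ref{def:gen scale}(ii) (to rule out the ``equivalent but different values'' case) and Lemma~\ref{lem:s Cap t for i(s) neq i(t)} (to rule out the ``orthogonal but different values'' case), and the latter relies crucially on Proposition~\ref{prop:A3 gives A4} together with the accurate foundation set mechanism. All subsequent steps are then essentially bookkeeping: a cardinality count to identify $\varphi$ with the identity, followed by multiplicative extension via the irreducible factorization of elements.
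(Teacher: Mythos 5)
Your proof is correct and follows essentially the same route as the paper: reduce to elements with irreducible value via Lemma~\ref{lem:equiv prod of nc irreds}, then use Definition~\ref{def:gen scale}(ii) together with Lemma~\ref{lem:irr maps to irr} and Lemma~\ref{lem:s Cap t for i(s) neq i(t)} to identify the fibers over irreducibles, and finish with the cardinality count from Definition~\ref{def:gen scale}(i) and multiplicativity. Your packaging of the fiber identification as a bijection $\varphi$ on the sets of irreducibles is only a cosmetic variant of the paper's inclusion-plus-symmetry argument.
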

\begin{proof}
Let $S$ be a right LCM monoid and $M,N\colon S \to \N^\times$ be two generalized scales. Fix $s \in S$ with $N_s \in \text{Irr}~N(S)$. By Lemma~\ref{lem:irr maps to irr}, we know that $M_s \in \text{Irr}~M(S)$. We claim that $N_s=M_s$. According to Definition~\ref{def:gen scale}(i), this amounts to $\lvert N^{-1}(N_s)/_\sim\rvert = \lvert M^{-1}(M_s)/_\sim\rvert$. By symmetry, it thus suffices to show $N^{-1}(N_s)/_\sim \subset M^{-1}(M_s)/_\sim$. Suppose to the contrary that there exists $t \in N^{-1}(N_s)$ with $t\not\sim s$ and $M_t\neq M_s$. By Definition~\ref{def:gen scale}(ii), $N_t=N_s$ and $t\not\sim s$ force $s\perp t$. However, Lemma~\ref{lem:irr maps to irr} entails $M_s,M_t \in \text{Irr}~M(S)$ as $N_s=N_t \in  \text{Irr}~N(S)$. Therefore, Lemma~\ref{lem:s Cap t for i(s) neq i(t)} applied to $M$ gives $s\Cap t$, contradicting $s\perp t$. Thus we get $N^{-1}(N_s)/_\sim \subset M^{-1}(M_s)/_\sim$, and hence $N_s=M_s$ for every $s \in S$ with $N_s \in \text{Irr}~N(S)$. 

In addition, we know that $M$ and $N$ are homomorphisms with $M|_{S_c} = 1 = N|_{S_c}$. To conclude that $N=M$, let $s \in S\setminus S_c$. By Lemma~\ref{lem:equiv prod of nc irreds}, there are $k\geq 1$ and $s_1,\ldots,s_k \in S$ with $N_{s_i} \in \text{Irr}~N(S)$ such that $s\sim s_1\cdots s_k$. This allows us to deduce 
\[N_s= N_{s_1}\cdots N_{s_k} = M_{s_1}\cdots M_{s_k} = M_s,\]
that is, $M=N$.
\end{proof}

\section{Existence and construction of generalized scales}
The idea behind the final step in the proof of the uniqueness result Theorem~\ref{thm:uniqueness} will serve as our starting point: If a right LCM monoid $S$ admits a generalized scale $N\colon S \to \N^\times$, then every element in $S\setminus S_c$ is a nonempty, finite product of elements in $N^{-1}(\text{Irr}~N(S))$. We can characterize these minimal elements in the following way:

\begin{proposition}\label{prop:indec = preimages of Irr(N(S))}
Let $N\colon S \to \N^\times$ be a generalized scale. Then $s \in S\setminus S_c$ satisfies $N_s \in \text{Irr}~N(S)$ if and only if $sa=tr$ for $a \in S_c$ and $t,r \in S$ implies either $t \in S_c$ or $r \in S_c$.
\end{proposition}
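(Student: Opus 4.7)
My plan is to use the homomorphism property of $N$ (together with Proposition~\ref{prop:ABLS3.6}(i) identifying $N^{-1}(1)$ with $S_c$) for the forward direction, and to invoke Lemma~\ref{lem:irr maps to irr} in a contrapositive argument for the backward direction. Since the technical work has already been done, the proof should be short; the only thing to be careful about is that irreducibility is considered inside $N(S) \subset \N^\times$, so the factors must themselves lie in $N(S)$, which is automatic here because they are images of semigroup elements.

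For the forward direction, suppose $N_s \in \Irr\, N(S)$ and that $sa = tr$ with $a \in S_c$ and $t, r \in S$. Applying $N$ and using $N_a = 1$ (from Proposition~\ref{prop:ABLS3.6}(i)) yields
\[
N_s \;=\; N_{sa} \;=\; N_t N_r,
\]
an equation in $N(S)$. Irreducibility of $N_s$ in $N(S)$ forces $N_t = 1$ or $N_r = 1$, which by Proposition~\ref{prop:ABLS3.6}(i) means $t \in S_c$ or $r \in S_c$, as required.

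For the backward direction, I would argue by contraposition. Suppose $N_s$ is reducible in $N(S)$. Applying Lemma~\ref{lem:irr maps to irr} with $M = N$ (the conclusion of the first statement of the lemma uses only one generalized scale), we obtain $a \in S_c$ and $t, r \in S \setminus S_c$ with $sa = tr$. This is a factorization witnessing the failure of the stated condition, since neither $t$ nor $r$ lies in $S_c$. Hence if every factorization $sa = tr$ with $a \in S_c$ forces $t \in S_c$ or $r \in S_c$, then $N_s$ must be irreducible in $N(S)$.

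The main (very minor) obstacle is simply making sure that the ``multiplying by a core element $a$'' in the hypothesis is compatible with Lemma~\ref{lem:irr maps to irr}, which already produces such an $a$; no further adjustment is needed.
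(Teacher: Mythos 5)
Your proof is correct and follows essentially the same route as the paper: the forward direction applies $N$ to the factorization $sa=tr$ and uses $N^{-1}(1)=S_c$ from Proposition~\ref{prop:ABLS3.6}(i), and the backward direction is the contrapositive via Lemma~\ref{lem:irr maps to irr} (whose first conclusion indeed only needs the single scale $M=N$), exactly as in the paper. The only cosmetic difference is that you phrase the forward implication directly while the paper phrases it contrapositively.
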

\begin{proof}
First we note that $N_s \in \text{Irr}~N(S)$ requires $N_s>1$, which amounts to $s \in S\setminus S_c$, see Proposition~\ref{prop:ABLS3.6}(i). Likewise, $sa=tr$ with $a\in S_c$ and $t \notin S_c$ or $r \notin S_c$ forces $s \notin S_c$. Thus we can restrict our attention to $s \in S\setminus S_c$. In case there are $a \in S_c$ and $t,r \in S\setminus S_c$ such that $sa=tr$, Proposition~\ref{prop:ABLS3.6}(i) entails that $N_s = N_{sa} = N_tN_r$ with $N_t,N_r >1$, which means $N_s \notin \text{Irr}~N(S)$. Conversely, if $N_s \notin \text{Irr}~N(S)$, then there are $t,r \in S$ with $t,r \in S\setminus S_c$ and $a \in S_c$ such that $sa=tr$, see Lemma~\ref{lem:irr maps to irr}.
\end{proof}

\begin{definition}\label{def:non-core irred}
Let $S$ be a right LCM monoid. An element $s \in S$ is \emph{noncore irreducible} if $s \notin S_c$ and whenever $sa=tr$ for some $a \in S_c$ and $t,r \in S$, then $t \in S_c$ or $r\in S_c$. The collection of all noncore irreducible elements of $S$ is denoted by $\CI(S)$.
\end{definition}

\begin{remark}\label{rem:non-core irred vs not core irred}
The notion introduced in Definition~\ref{def:non-core irred} is not to be confused with \emph{core irreducibility} from \cite{ABLS}: $s \in S$ is core irreducible if $s=ta$ with $t \in S, a \in S_c$ implies $a \in S^*$, the subgroup of invertible elements in $S$. We note that noncore irreducibility is preserved under core equivalence. 
\end{remark}

\begin{definition}\label{def:core graph}
The \emph{core graph} of a right LCM monoid $S$ is the undirected graph $\Gamma(S) :=(V,E)$ with vertex set $V:= \CI(S)/_\sim$ and edge set $E:= \{ ([s],[t]) \mid s \Cap t\}$. 
\end{definition}

We will simply write $\Gamma$ whenever this is unambiguous. For convenience, we recall that $\Gamma$ is characterized by its unique decomposition into coconnected components $(\Gamma_i)_{i \in I}$ with $\Gamma_i = (V_i,E_i)$. Such coconnected components are determined by the vertex set $V_i$ as $E_i = V_i\times V_i \cap E$. A natural characterization of coconnectedness is that the $V_i$ forms a maximal, connected subset of the complimentary graph $(V,V\times V\setminus E)$.

\begin{notation}\label{not:co-conn comp identifier}
For $s \in \CI(S)$, we denote by $i(s) \in I$ the index with $[s] \in V_{i(s)}$.
\end{notation}

Recall from Lemma~\ref{lem:thebijectionsg} that $a.[s] := [as]$ defines an action $\alpha\colon S_c\curvearrowright S/_\sim$ by bijections.

\begin{lemma}\label{lem:alpha on I(S)}
For $a \in S_c$ and $s\in S$, $as \in \CI(S)$ holds if and only if $s \in \CI(S)$, that is, the action $\alpha$ restricts to an action on $\CI(S)/_\sim$.
\end{lemma}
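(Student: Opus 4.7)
The plan is to prove the equivalence by first reducing the statement about $\CI(S)$ to a statement compatible with the action $\alpha$ of $S_c$ on $S/_\sim$, and then unwinding the defining relation of noncore irreducibility on both sides.

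The first step I would carry out is to verify the cleaner preliminary claim that $as \in S_c$ if and only if $s \in S_c$. Since $a \in S_c$, the equality $a \cdot e = e \cdot a$ with both factors in $S_c$ shows $a \sim e$, so $\alpha_a([e]) = [a] = [e]$. Combined with the fact (derivable from hereditarity of $S_c$, cf.\ Lemma~\ref{lem:rLCM if core eq}) that $[s] = [e]$ if and only if $s \in S_c$, the injectivity of $\alpha_a$ from Lemma~\ref{lem:thebijectionsg} yields the preliminary claim; in particular, $as \notin S_c \Leftrightarrow s \notin S_c$.

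For the forward direction, assume $s \in \CI(S)$ and consider an equation $(as)b = tr$ with $b \in S_c$ and $t,r \in S$. Since $a \in S_c$, we have $a \Cap t$, so by the right LCM property we may write $aS\cap tS = txS$, and Lemma~\ref{lem:cap with core} forces $x \in S_c$; pick $y \in S$ with $tx = ay$. Because $asb = tr \in aS\cap tS = txS$, the element $r$ factors as $r = xz$ for some $z \in S$, and cancelling $a$ on the left in $asb = txz = ayz$ gives $sb = yz$. Applying the noncore irreducibility of $s$ to this equation yields $y \in S_c$ or $z \in S_c$; the first case implies $tx = ay \in S_c$ and hence $t \in S_c$ by hereditarity, while the second case immediately gives $r = xz \in S_c$. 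For the reverse direction, assume $as \in \CI(S)$ and suppose $sb = tr$ with $b \in S_c$. Multiplying by $a$ on the left yields $(as)b = (at)r$, and noncore irreducibility of $as$ produces $at \in S_c$ or $r \in S_c$; in the former subcase, $\alpha_a([t]) = [at] = [e] = \alpha_a([e])$ combined with injectivity of $\alpha_a$ gives $t \in S_c$.

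The step I expect to require the most care is the forward direction: one must correctly position the core element produced by Lemma~\ref{lem:cap with core} inside the equation $(as)b = tr$ so that after left cancellation by $a$ the result can be fed back into the defining condition for $s \in \CI(S)$, and then recover $t \in S_c$ (rather than merely $at \in S_c$) via hereditarity. The reverse direction is essentially immediate once the bijection $\alpha_a$ is used to translate $at \in S_c$ back to $t \in S_c$.
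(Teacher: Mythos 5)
Your proof is correct, but it is organized differently from the paper's, and in the harder direction it is genuinely simpler. The paper reduces the statement to the claim that $\alpha_a^{-1}([s])\in\CI(S)/_\sim$ iff $[s]\in\CI(S)/_\sim$, i.e.\ it works with the element $s'$ defined by $sS\cap aS=sbS$, $sb=as'$, and for the implication ``$s'\in\CI(S)\Rightarrow s\in\CI(S)$'' it must reconcile an arbitrary factorization $sc=tr$ with the relation $sb=as'$; this is done via a diagram chase involving the auxiliary right LCMs $bS_c\cap cS_c$ and $tS\cap aS$ and the elements $a',b',c',t',r'$. You instead prove the stated equivalence for the pair $(s,as)$ directly: your hard direction ($s\in\CI(S)\Rightarrow as\in\CI(S)$) takes a factorization $(as)b=tr$, intersects $aS\cap tS=txS$ so that Lemma~\ref{lem:cap with core} gives $x\in S_c$, and after two left cancellations feeds $sb=yz$ back into the noncore irreducibility of $s$, recovering $t\in S_c$ (via hereditarity, Lemma~\ref{lem:rLCM if core eq}) or $r\in S_c$; your easy direction (multiply $sb=tr$ by $a$ and use $at\in S_c\Leftrightarrow t\in S_c$) matches the paper's easy direction in spirit. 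Both arguments rest on the same ingredients (Lemma~\ref{lem:cap with core}, left cancellation, hereditarity of $S_c$), but your formulation avoids the core-equivalence bookkeeping and the diagram entirely, at the cost of not explicitly exhibiting the preimage element $s'$ that the paper's phrasing produces along the way. One small point: to pass from the equivalence ``$as\in\CI(S)\Leftrightarrow s\in\CI(S)$'' to the clause that $\alpha$ restricts to an action on $\CI(S)/_\sim$, you should note (as the paper does in Remark~\ref{rem:non-core irred vs not core irred}) that $\CI(S)$ is closed under core equivalence, so that membership of $[as]$ in $\CI(S)/_\sim$ is independent of representatives; this is immediate, but it is the step that makes the ``that is'' in the statement legitimate.
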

\begin{proof}
Let $a \in S_c$ and $s \in S$. It is clear that $as \in S_c$ if and only if $s \in S_c$, so we may restrict to the case where $s \in S\setminus S_c$. We will prove that $\alpha_a^{-1}([s]) \in \CI(S)/_\sim$ if and only if $[s] \in \CI(S)/_\sim$, which is equivalent to the original claim since $\CI(S)$ is closed under core equivalence (essentially by definition) and $\alpha_a$ is bijective, see Lemma~\ref{lem:thebijectionsg}. Using Lemma~\ref{lem:cap with core}, there are $b \in S_c$ and $s' \in S\setminus S_c$ such that $sS\cap aS = sbS, sb=as'$. We claim that $s \in \CI(S)$ if and only if $s' \in \CI(S)$, that is, $[s] \in \CI(S)/_\sim$ if and only if $[s'] = \alpha_a^{-1}([s]) \in \CI(S)$: If there are $c \in S_c, t,r \in S\setminus S_c$ with $s'c=tr$, then $sbc=as'c=(at)r$ implies $s \notin \CI(S)$. Thus $s \in \CI(S)$ implies $s' \in \CI(S)$. Conversely, if there are $c \in S_c, t,r \in S$ with $sc=tr$, then $bS_c\cap cS_c = bc'S_c, bc'=cb'$ for some $b',c' \in S_c$ by Lemma~\ref{lem:cap with core}. With $trb'=scb'=as'c'$ we get a diagram
\[\xymatrix{
\ar@{~>}[d]_r && \ar@{.>}[ll]^{b'} \ar@{~>}[dl]^{r'} \ar@{.>}[dd]_{c'}\\
\ar@{~>}[d]_t & \ar@{.>}[l]^{a'} \ar@{~>}[d]_{t'}\\
&\ar@{.>}[l]^a & \ar^{s'}[l]
}\]
where $tS\cap aS=ta'S, ta'=at'$ and $r'\in S$ with $rb'=a'r'$. Here $r'$ results from the fact that $ta'$ is the right LCM of $t$ and $a$, while $trb'=as'c'$ is another common right multiple. The dotted arrows represent elements from $S_c$, while solid arrows refer to $\CI(S)$ (and $\rightsquigarrow$ bear no constraints). We get $s'c'=t'r'$ (upon using left cancellation). Since $a \in S_c$, we have $a' \in S_c$ by Lemma~\ref{lem:cap with core}. Thus $t \in S_c$ holds iff $t' \in S_c$, and $r \in S_c$ iff $r' \in S_c$. Therefore $s' \in \CI(S)$ forces $s \in \CI(S)$.
\end{proof}

\begin{proposition}\label{prop:alpha restricts to co-conn comp}
The action $\alpha\colon S_c\curvearrowright S/_\sim$ determines an action $\beta\colon S_c \curvearrowright \Gamma$ by graph automorphisms. In particular, $\beta_a(\Gamma_i)$ is a coconnected component of $\Gamma$ isomorphic with $\Gamma_i$ for all $i \in I$ and $a \in S_c$.
\end{proposition}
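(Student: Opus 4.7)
The plan is to define $\beta_a$ as the restriction of $\alpha_a$ to $\CI(S)/_\sim \subset S/_\sim$, verify that each $\beta_a$ is a graph automorphism of $\Gamma$, and then use the fact that graph automorphisms automatically permute coconnected components.

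First I would observe that by Lemma~\ref{lem:alpha on I(S)} the action $\alpha$ restricts to $\CI(S)/_\sim$, so setting $\beta_a := \alpha_a|_{\CI(S)/_\sim}$ yields a well-defined bijection of the vertex set $V = \CI(S)/_\sim$ for each $a \in S_c$. The identities $\beta_{ab} = \beta_a \circ \beta_b$ and $\beta_1 = \id$ are inherited from the analogous identities for $\alpha$, so $\beta$ is an action of $S_c$ on $V$; it remains only to show that each $\beta_a$ preserves the edge set $E$.

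The key step is the equivalence
\[
s \Cap t \quad \Longleftrightarrow \quad as \Cap at \qquad (a \in S_c,\ s,t \in \CI(S)).
\]
The forward direction is immediate: if $sS \cap tS \ni x$, then $ax \in asS \cap atS$. For the converse, if $asp = atq$ for some $p,q \in S$, then left cancellation in $S$ gives $sp = tq$, hence $s \Cap t$. This shows $([s],[t]) \in E$ iff $(\beta_a([s]),\beta_a([t])) \in E$, so $\beta_a$ is a graph automorphism of $\Gamma$.

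Finally, since every graph automorphism of $\Gamma$ is simultaneously a graph automorphism of the complementary graph $(V, V\times V\setminus E)$, and the coconnected components of $\Gamma$ are by definition the connected components of this complementary graph, any graph automorphism must permute them. Consequently, for each $i \in I$ and $a \in S_c$, the image $\beta_a(\Gamma_i)$ is a connected subgraph of the complementary graph which is moreover maximal (being the bijective image of a maximal such subgraph), hence a coconnected component of $\Gamma$; and the restriction of $\beta_a$ provides the required graph isomorphism $\Gamma_i \cong \beta_a(\Gamma_i)$. The only subtlety to double-check is that the backward implication in the edge criterion really needs only left cancellation (not the full core structure), which is why the argument works uniformly for all $a \in S_c$ rather than just a restricted class.
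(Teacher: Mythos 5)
Your proposal is correct and follows essentially the same route as the paper: restrict $\alpha$ to $\CI(S)/_\sim$ via Lemma~\ref{lem:alpha on I(S)} and Lemma~\ref{lem:thebijectionsg}, observe that $s\Cap t \Leftrightarrow as \Cap at$ by left cancellation (the paper likewise notes this holds for any left factor, not just core elements), and conclude that a graph automorphism permutes the coconnected components since these are the connected components of the complementary graph.
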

\begin{proof}
By Lemma~\ref{lem:alpha on I(S)}, $\alpha$ restricts to an action on the vertex set $\CI(S)/_\sim$. Recall that for $r,s,t \in S$ we have $s\perp t$ if and only if $rs \perp rt$ (using left cancellation). In particular, $([s],[t]) \in E$ is equivalent to $([as],[at]) \in E$ for all $s,t \in \CI(S), a \in S_c$. Therefore $\alpha_a$ induces an automorphism $\beta_a$ of the graph $\Gamma$. For every such map, $\beta_a(\Gamma_i)$ is again a coconnected component of $\Gamma$ as these are determined by the connectivity inside the graph $\Gamma$.
\end{proof}

As an immediate consequence of Proposition~\ref{prop:alpha restricts to co-conn comp}, we note:

\begin{corollary}\label{cor:alpha permutes co-conn comps}
If the coconnected components of $\Gamma$ are mutually nonisomorphic, then $\alpha$ restricts to an action $\alpha_a\colon S_c \curvearrowright V_i$ on the vertex sets of the coconnected components $\Gamma_i$ for all $i \in I$. 
\end{corollary}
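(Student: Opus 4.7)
The plan is to leverage Proposition~\ref{prop:alpha restricts to co-conn comp} directly: it already tells us that for each $a \in S_c$ and each $i \in I$, the image $\beta_a(\Gamma_i)$ is a coconnected component of $\Gamma$ which is graph-isomorphic to $\Gamma_i$. All that remains is to identify which component this image is.

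First I would observe that the coconnected components $(\Gamma_i)_{i \in I}$ partition the vertex set $\CI(S)/_\sim$, so there is a well-defined index $j = j(a,i) \in I$ with $\beta_a(\Gamma_i) = \Gamma_j$. Next, invoking the isomorphism $\beta_a\colon \Gamma_i \to \Gamma_j$ supplied by Proposition~\ref{prop:alpha restricts to co-conn comp}, together with the standing hypothesis that the coconnected components of $\Gamma$ are mutually nonisomorphic, I would conclude $j=i$. Hence $\beta_a(V_i)=V_i$ for all $a \in S_c$ and all $i \in I$, so the $S_c$-action $\beta$ restricts to an action on each $V_i$ (and therefore so does the underlying action $\alpha$ on $\CI(S)/_\sim$).

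There is essentially no obstacle here: the work was done in Proposition~\ref{prop:alpha restricts to co-conn comp}, and the corollary is simply a pigeonhole observation using the nonisomorphism hypothesis. The only subtlety worth mentioning explicitly is that $\beta_a$ being a graph automorphism of $\Gamma$ that respects the decomposition into coconnected components forces it to permute those components, and a permutation among mutually nonisomorphic objects that sends each object to an isomorphic one must be the identity on indices.
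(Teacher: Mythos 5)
Your argument is correct and is exactly the route the paper intends: the paper presents the corollary as an immediate consequence of Proposition~\ref{prop:alpha restricts to co-conn comp}, since $\beta_a(\Gamma_i)$ is a coconnected component isomorphic to $\Gamma_i$, and mutual nonisomorphism forces $\beta_a(\Gamma_i)=\Gamma_i$, hence $\alpha_a(V_i)=V_i$. Nothing further is needed.
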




\begin{definition}\label{def:balanced factorization}
Let $S$ be a right LCM monoid.
\begin{enumerate}[(i)]
\item The semigroup $S$ is \emph{noncore factorable} if every element in $S\setminus S_c$ is core equivalent to a finite product of noncore irreducibles.
\item The semigroup $S$ has \emph{balanced factorization} if $i(s)\neq i(t)$ implies $sS \cap tS=st'S, st'=ts'$ for some $s',t' \in \CI(S)$ with $i(s')=i(s),i(t')=i(t)$ for all $s,t \in \CI(S)$, where $i(s) \in I$ is the index of the coconnected component of $\Gamma$ containing $[s]$.
\end{enumerate}
\end{definition}

Our next target is to detect the existence of and compute a right LCM for two finite products of noncore irreducibles.

\begin{example}\label{ex:right LCM I(S) algorithm}
Assume that balanced factorization holds for $S$ and that the coconnected components of $\Gamma$ are edge-free. Let $s_1,s_2,t_1,t_2,t_3 \in \CI(S)$ and consider $s:=s_1s_2,t:=t_1t_2t_3$. Suppose for convenience that we already know that $s\Cap t$ holds (otherwise the process would terminate at one of the steps to come, and thus signalling $s\perp t$). Then we must have $s_1\Cap t_1$ because $sS \subset s_1S$ and $tS \subset t_1S$. Suppose $[s_1]$ and $[t_1]$ belong to the same coconnected component of $\Gamma$. As the component is edge-free, this forces $s_1\sim t_1$, so there are $s_1^{(1)},t_1^{(1)} \in S_c$ with $s_1S\cap t_1S= s_1t_1^{(1)}S, s_1t_1^{(1)}=t_1s_1^{(1)}$. Next, we will determine a right LCM for $s_1^{(1)}$ and $t_2$: Since $s_1^{(1)}\in S_c$, Lemma~\ref{lem:cap with core} implies that there are $s_1^{(2)} \in S_c$ and $t_2^{(1)} \in S$ with $s_1^{(1)}S\cap t_2S= s_1^{(1)}t_2^{(1)}S, s_1^{(1)}t_2^{(1)}=t_2s_1^{(2)}$. Due to Lemma~\ref{lem:alpha on I(S)}, we also know that $t_2^{(1)} \in \CI(S)$. A repetition of this argument yields a right LCM for $s_1^{(2)}$ and $t_3$: $s_1^{(3)} \in S_c$ and $t_3^{(1)} \in \CI(S)$ with $s_1^{(2)}S\cap t_3S= s_1^{(2)}t_3^{(1)}S, s_1^{(2)}t_3^{(1)}=t_3s_1^{(3)}$. This completes the first column in our diagram, where solid arrows refer to $\CI(S)$ and dotted ones to $S_c$, as in the proof of Lemma~\ref{lem:alpha on I(S)}:
\[\begin{array}{ccc}
\xymatrix{
\ar_{t_3^{(0)}}[d] \\
\ar_{t_2^{(0)}}[d] \\
\ar_{t_1^{(0)}}[d] & \ar@{.>}^(.6){s_1^{(1)}}[l] \ar@{.>}_(.6){t_1^{(1)}}[d]\\
&\ar^{s_1^{(0)}}[l]&\ar^{s_2^{(0)}}[l]
}&
\xymatrix{
\ar_{t_3^{(0)}}[d] \\
\ar_{t_2^{(0)}}[d] & \ar@{.>}^(.6){s_1^{(2)}}[l] \ar_(.6){t_2^{(1)}}[d]\\
\ar_{t_1^{(0)}}[d] & \ar@{.>}^(.6){s_1^{(1)}}[l] \ar@{.>}_(.6){t_1^{(1)}}[d]\\
									& \ar^{s_1^{(0)}}[l]									    & \ar^{s_2^{(0)}}[l]
}&
\xymatrix{
\ar_{t_3^{(0)}}[d] & \ar@{.>}^(.6){s_1^{(3)}}[l] \ar_(.6){t_3^{(1)}}[d]\\
\ar_{t_2^{(0)}}[d] & \ar@{.>}^(.6){s_1^{(2)}}[l] \ar_(.6){t_2^{(1)}}[d] \\
\ar_{t_1^{(0)}}[d] & \ar@{.>}^(.6){s_1^{(1)}}[l] \ar@{.>}_(.6){t_1^{(1)}}[d]\\
									&\ar^{s_1^{(0)}}[l] 									   & \ar^{s_2^{(0)}}[l]
}\end{array}\]
For the bottom square in the second column, the same argument yields a right LCM for $s_2$ and $t_1^{(1)}$: $s_2^{(1)} \in \CI(S)$ and $t_1^{(2)} \in S_c$ with $s_2S\cap t_1^{(1)}S= s_2t_1^{(2)}S, s_2t_1^{(2)}=t_1^{(1)}s_2^{(1)}$. Thus we see that the encounter of a square with $s_{k+1}^{(\ell)} \sim t_{\ell+1}^{(k)}$ simplifies the task tremendously because we can then easily complete the corresponding row and column. Next, we focus on a right LCM for $s_2^{(1)}$ and $t_2^{(1)}$: The assumption $s\Cap t$ implies $s_2^{(1)} \Cap t_2^{(1)}$. Suppose that $i(s_2^{(1)}) \neq i(t_2^{(1)})$, that is, they belong to distinct coconnected components of $\Gamma$. Then balanced factorization grants $s_2^{(2)},t_2^{(2)} \in \CI(S)$ (with $i(s_2^{(1)}) = i(s_2^{(2)})$ and $i(t_2^{(1)}) = i(t_2^{(2)})$) such that $s_2^{(1)}S\cap t_2^{(1)}S= s_2^{(1)}t_2^{(2)}S, s_2^{(1)}t_2^{(2)}=t_2^{(1)}s_2^{(2)}$. For the right LCM for $s_2^{(2)}$ and $t_3^{(1)}$ suppose again that they have the same coconnected component, and thus $s_2^{(2)} \sim t_3^{(1)}$. This leads to the existence of $s_2^{(3)},t_3^{(2)} \in S_c$ with $s_2^{(2)}S\cap t_3^{(1)}S= s_2^{(2)}t_3^{(2)}S, s_2^{(2)}t_3^{(2)}=t_3^{(1)}s_2^{(3)}$, and our diagram is complete:
\[\begin{array}{ccc}
\xymatrix{
\ar_{t_3^{(0)}}[d] & \ar@{.>}^(.6){s_1^{(3)}}[l] \ar_(.6){t_3^{(1)}}[d] & \\
\ar_{t_2^{(0)}}[d] & \ar@{.>}^(.6){s_1^{(2)}}[l] \ar_(.6){t_2^{(1)}}[d] \\
\ar_{t_1^{(0)}}[d] & \ar@{.>}^(.6){s_1^{(1)}}[l] \ar@{.>}_(.6){t_1^{(1)}}[d] & \ar^(.6){s_2^{(1)}}[l] \ar@{.>}_(.6){t_1^{(2)}}[d]\\
									& \ar^{s_1^{(0)}}[l]									    & \ar^{s_2^{(0)}}[l]
}&
\xymatrix{
\ar_{t_3^{(0)}}[d] & \ar@{.>}^(.6){s_1^{(3)}}[l] \ar_(.6){t_3^{(1)}}[d] & \\
\ar_{t_2^{(0)}}[d] & \ar@{.>}^(.6){s_1^{(2)}}[l] \ar_(.6){t_2^{(1)}}[d]  & \ar^(.6){s_2^{(2)}}[l] \ar_(.6){t_2^{(2)}}[d]\\
\ar_{t_1^{(0)}}[d] & \ar@{.>}^(.6){s_1^{(1)}}[l] \ar@{.>}_(.6){t_1^{(1)}}[d] & \ar^(.6){s_2^{(1)}}[l] \ar@{.>}_(.6){t_1^{(2)}}[d]\\
									& \ar^{s_1^{(0)}}[l]									    & \ar^{s_2^{(0)}}[l]
}&
\xymatrix{
\ar_{t_3^{(0)}}[d] & \ar@{.>}^(.6){s_1^{(3)}}[l] \ar_(.6){t_3^{(1)}}[d] & \ar@{.>}^(.6){s_2^{(3)}}[l] \ar@{.>}_(.6){t_3^{(2)}}[d] \\
\ar_{t_2^{(0)}}[d] & \ar@{.>}^(.6){s_1^{(2)}}[l] \ar_(.6){t_2^{(1)}}[d] & \ar^(.6){s_2^{(2)}}[l] \ar_(.6){t_2^{(2)}}[d]\\
\ar_{t_1^{(0)}}[d] & \ar@{.>}^(.6){s_1^{(1)}}[l] \ar@{.>}_(.6){t_1^{(1)}}[d] & \ar^(.6){s_2^{(1)}}[l] \ar@{.>}_(.6){t_1^{(2)}}[d]\\
									& \ar^{s_1^{(0)}}[l]									    & \ar^{s_2^{(0)}}[l]
}
\end{array}\]
A right LCM for $s$ and $t$ is now given by $st_1^{(2)}t_2^{(2)}t_3^{(2)}$ (or taking the product along any path from the upper right to the lower left corner in the completed diagram).
\end{example}

\begin{remark}\label{rem:rLCM for products built from I(S)}
Let $s_1,\ldots,s_m,t_1,\ldots,t_n \in \CI(S)$ and $s:= s_1\cdots s_m, t:=t_1\cdots t_n$. Assume that balanced factorization holds for $S$ and that the coconnected components of $\Gamma$ are edge-free. We want to describe an algorithm that decides whether $s\Cap t$ or not, and produces a right LCM for $s$ and $t$ in case $s\Cap t$. Let us first fix the notation: We introduce two indices $k,\ell$ with initial value $(k,\ell) := (0,0)$ and range $0\leq k\leq m-1, 0\leq \ell \leq n-1$, and set $s_j^{(0)} := s_j$ for $1\leq j \leq m$, $t_j^{(0)} := t_j$ for $1\leq j\leq n$. In order to find a right LCM for $s$ and $t$, we need to find elements $s_{k+1}^{(\ell)},t_{\ell+1}^{(k)} \in \CI(S)\cup S_c$, where $(0,0)\leq (k,\ell) \leq (m-1,n-1)$, such that 
\begin{equation}\label{eq:intersection step}
s^{(\ell)}_{k+1}S \cap t^{(k)}_{\ell+1}S = s^{(\ell)}_{k+1}t^{(k+1)}_{\ell+1}S, \quad s^{(\ell)}_{k+1}t^{(k+1)}_{\ell+1}=t^{(k)}_{\ell+1}s^{(\ell+1)}_{k+1}
\end{equation}
holds for all $(0,0)\leq (k,\ell) \leq (m-1,n-1)$. In other words, we successively compute right LCMs for the (altered) factors of $s$ and $t$, and compose these to a right LCM of the product. We find it convenient to employ a flowchart for describing this repetitive process:

\pgfdeclarelayer{marx}
\pgfsetlayers{main,marx}
\providecommand{\cmark}[2][]{%
  \begin{pgfonlayer}{marx}
    \node [nmark] at (c#2#1) {#2};
  \end{pgfonlayer}{marx}
  } 
\providecommand{\cmark}[2][]{\relax} 
\begin{center}
\scalebox{.8}{
\begin{tikzpicture}[%
    >=triangle 60,              
    start chain=going below,    
    node distance= 20mm and 30mm,
    every join/.style={norm},   
    ]
\tikzset{
  base/.style={draw, on chain, on grid, align=center, minimum height=4ex}, 
  proc/.style={base, rectangle},
  test/.style={proc, densely dotted, rounded corners},
  term/.style={base, diamond, aspect=2, text width=10em},
  coord/.style={coordinate, on chain, on grid, node distance=6mm and 25mm},
  nmark/.style={draw, cyan, circle, font={\sffamily\bfseries}},
  norm/.style={->, draw, black},
  pos/.style={->,  black},
  neg/.style={->, draw, gray},
  it/.style={font={\small\itshape}}
}
\node [proc] (p0) {$(k,\ell):=(0,0)$, $(s^{(0)}_{j},t^{(0)}_{j'}):= (s_j,t_{j'})$ for $1\leq j \leq m, 1\leq j'\leq n$};
\node [test, below= of p0] (t1) {$\{s^{(\ell)}_{k+1},t^{(k)}_{\ell+1}\} \subset \CI(S)$?};
\node[test,below left=of t1] (t2) {$i(s^{(\ell)}_{k+1}) = i(t^{(k)}_{\ell+1})$?};
\node [test, below right= of t1] (t3) {$s^{(\ell)}_{k+1} \in \CI(S)$?};
\node[test,below left= 20mm and 5mm of t2] (t4) {$s^{(\ell)}_{k+1} \Cap t^{(k)}_{\ell+1}$?};
\node[test,below=20mm of t3] (t7) {$t^{(k)}_{\ell+1} \in \CI(S)$?};
\node [proc,below= 20mm of t4] (p1) {A};
\node [proc,right= 15mm of p1] (p2) {B};

\node [proc,right= 25mm of p2] (p3) {C};
\node [proc,below left= 20mm and 8mm of t7] (p4) {D};
\node [proc,below right= 20mm and 8mm of t7] (p7) {E};
\node [test,below= of p2] (t5) {$\ell+1=n$?};
\node [test,below= of t5] (t6) {$k+1=m$?};
\node [proc,below right= 100mm and 0mm of t3] (p5) {$sS\cap tS = st_1^{(m)}\cdots t_n^{(m)}S$};
\node [proc,below left= 100mm and 10mm of t2] (p6) {$s \perp t$};

\node [coord, left=of t1] (c1)  {}; 
\node [coord, right=of t1] (c2)  {}; 
\node [coord, left=of t3] (c3)  {};
\node [coord, below=of p1] (c41)  {};
\node [coord, below=of p2] (c42)  {};
\node [coord, below=of p3] (c43)  {};
\node [coord, below=of p4] (c44)  {};
\node [coord, below=of p7] (c45)  {};
\node [coord, left=of t4] (c5)  {}; 
\node [coord, left=20mm of p6]  (c6)  {}; 
\node [coord, right= 70mm of t5] (c7)  {};
\node [coord, above right= 10mm and 50mm of t1] (c8)  {};
\node [coord, above right= 10mm and 5mm of t1] (c9)  {};
\node [coord, right= 70mm of t6] (c10)  {};
\node [coord, below= 20mm of t6] (c11)  {};
\path (p0.south) to node [near start,xshift=-2mm, yshift=1mm] {} (t1);
  \draw [->,black] (p0.south) -- (t1);
\path (t1.west) to node [near start,xshift=-2mm, yshift=2mm] {\small{yes}} (c1); 
  \draw [->,gray] (t1.west) -- (c1) -| (t2);
\path (t1.east) to node [near start, xshift=2mm, yshift=2mm] {\small{no}} (c2); 
  \draw [->,gray] (t1.east) -- (c2) -| (t3);
\path (t2.south) to node [near start,xshift=-8mm, yshift=1mm] {\small{yes}} (t4); 
  \draw [->,gray] ([xshift=-5mm]t2.south) -- ([xshift=-5mm]t4);
\path (t2.south) to node [near start, xshift=10mm, yshift=11.7mm] {\small{no}} (p2); 
  \draw [->,gray] ([xshift=10mm]t2.south) -- (p2);
\path (t3.west) to node [near start,xshift=-10mm, yshift=-9mm] {\small{yes}} (c3); 
  \draw [->,gray] (t3.west) -- (c3) -- (p3);
\path (t3.south) to node [near start, xshift=4mm, yshift=0mm] {\small{no}} (t7); 
  \draw [->,gray] (t3.south) -- (t7);
\path (t7.south) to node [near start, xshift=-10mm, yshift=0mm] {\small{yes}} (p4); 
  \draw [->,gray] ([xshift=-8mm]t7.south) -- (p4);
\path (t7.south) to node [near start,xshift=10mm, yshift=0.4mm] {\small{no}} (p7); 
  \draw [->,gray] ([xshift=8mm]t7.south) -- ([xshift=0mm]p7.north);
\path (t4.south) to node [near start,xshift=-7mm, yshift=-10mm] {\small{no}} (p6); 
  \draw [->,gray] ([xshift=-5mm]t4.south) -- (p6);
\path (t4.south) to node [near start, xshift=4mm, yshift=0.5mm] {\small{yes}} (p1); 
  \draw [->,gray] ([xshift=0mm]t4.south) -- (p1);
\draw [->,black] (p1.south) -- (c41) -- (c42) -- (t5);
\draw [->,black] (p2.south) -- (c42) -- (t5);
\draw [->,black] (p3.south) -- (c43) -- (c42) -- (t5);
\draw [->,black] (p4.south) -- (c44) -- (c42) -- (t5);
\draw [->,black] (p7.south) -- (c45) -- (c42) -- (t5);
\path (t5.south) to node [near start,xshift=-5mm, yshift=0mm] {\small{yes}} (t6); 
  \draw [->,gray] (t5.south) -- (t6);
\path (t5.east) to node [near start, xshift=0mm, yshift=2mm] {\small{no} \hspace*{10mm}\small{$\ell \mapsto \ell+1$}} (c7); 
\draw [->,gray] (t5.east) -- (c7) -- (c8) -- (c9) -- ([xshift=5mm]t1.north);
\path (t6.south) to node [near start,xshift=-5mm, yshift=0mm] {\small{yes}} (c11); 
  \draw [->,gray] (t6.south) -- (c11) -- (p5);
\path (t6.east) to node [near start, xshift=2mm, yshift=2mm] {\small{no} \hspace*{10mm}\small{$k \mapsto k+1, \ell \mapsto 0$}} (c10); 
\draw [->,gray] (t6.east) -- (c10) -- (c8) -- (c9) -- ([xshift=5mm]t1.north);

\end{tikzpicture}
}
\end{center}

Before explaining the output, let us first describe the processes \boxed{A}--\boxed{E} from the flowchart above, in which we compute a right LCM (or rather the missing right factors) for $s^{(\ell)}_{k+1}$ and $t^{(k)}_{\ell+1}$. Apart from \boxed{E}, these did already make an appearance in Example~\ref{ex:right LCM I(S) algorithm}. We point out that $s^{(\ell)}_{k+1} \Cap t^{(k)}_{\ell+1}$ holds whenever we enter any of these processes within the algorithm. 
\begin{enumerate}
\item[\boxed{A}] We have $s^{(\ell)}_{k+1},t^{(k)}_{\ell+1} \in \CI(S)$ such that $[s^{(\ell)}_{k+1}]$ and $[t^{(k)}_{\ell+1}]$ belong to the same coconnected component of $\Gamma$, that is, $i(s^{(\ell)}_{k+1}) = i(t^{(k)}_{\ell+1})$. Then $s^{(\ell)}_{k+1} \Cap t^{(k)}_{\ell+1}$ and the assumption of edge-freeness force $s^{(\ell)}_{k+1} \sim t^{(k)}_{\ell+1}$, so Lemma~\ref{lem:rLCM if core eq} implies: 

$\rightsquigarrow$ There exist $s^{(\ell+1)}_{k+1}, t^{(k+1)}_{\ell+1} \in S_c$ with \eqref{eq:intersection step}.

\item[\boxed{B}] We have $s^{(\ell)}_{k+1},t^{(k)}_{\ell+1} \in \CI(S)$ and the two vertices $[s^{(\ell)}_{k+1}],[t^{(k)}_{\ell+1}]$ belong to distinct coconnected components of $\Gamma$, that is, $i(s^{(\ell)}_{k+1}) \neq i(t^{(k)}_{\ell+1})$. Here, balanced factorization implies:

$\rightsquigarrow$ There exist $s^{(\ell+1)}_{k+1}, t^{(k+1)}_{\ell+1} \in \CI(S)$ with \eqref{eq:intersection step} satisfying $i(s^{(\ell)}_{k+1}) = i(s^{(\ell+1)}_{k+1})$ and $i(t^{(k)}_{\ell+1}) = i(t^{(k+1)}_{\ell+1})$.

\item[\boxed{C}] We have $s^{(\ell)}_{k+1} \in \CI(S)$ and $t^{(k)}_{\ell+1} \in S_c$. Thus Lemma~\ref{lem:cap with core} and Lemma~\ref{lem:alpha on I(S)} imply:

$\rightsquigarrow$ There are $s^{(\ell+1)}_{k+1} \in \CI(S), t^{(k+1)}_{\ell+1} \in S_c$ with \eqref{eq:intersection step}.

\item[\boxed{D}] We have $s^{(\ell)}_{k+1} \in S_c$ and $t^{(k)}_{\ell+1} \in \CI(S)$. Thus Lemma~\ref{lem:cap with core} and Lemma~\ref{lem:alpha on I(S)} imply:

$\rightsquigarrow$ There are $s^{(\ell+1)}_{k+1} \in S_c, t^{(k+1)}_{\ell+1} \in \CI(S)$ with \eqref{eq:intersection step}.

\item[\boxed{E}] We have $s^{(\ell)}_{k+1},t^{(k)}_{\ell+1} \in S_c$. So Lemma~\ref{lem:cap with core} implies: 

$\rightsquigarrow$ There are $s^{(\ell+1)}_{k+1},t^{(k+1)}_{\ell+1} \in S_c$ with \eqref{eq:intersection step}.
\end{enumerate}
The algorithm starts with loop index $(k,\ell)=(0,0)$ and ends if it arrives at some level $(k',\ell')$ with $i(s_{k'+1}^{(\ell')})=i(t_{\ell'+1}^{(k')})$ and $s_{k'+1}^{(\ell')} \perp t_{\ell'+1}^{(k')}$, or if a right LCM has been obtained for $s_{m}^{(n-1)} \perp t_{n}^{(m-1)}$. In the first case, the algorithm shows
\[sS\cap tS \subset s_1\cdots s_{k'+1}S \cap t_1\cdots t_{\ell'+1}S = s_1\cdots s_{k'}t_1^{(k')}\cdots t_{\ell'}^{(k')}(s_{k'+1}^{(\ell')}S \cap t_{\ell'+1}^{(k')}S) = \emptyset,\]
which forces $s\perp t$. Similarly, the second case leads to
\[sS\cap tS = s_1\cdots s_m t_1^{(m)}\cdots t_n^{(m)}S = t_1\cdots t_n s_1^{(n)}\cdots s_m^{(n)}S,\]
in which case we obtain the right LCM for $s$ and $t$ by combining the right LCMs for the pairs appearing in the algorithm along a path on the grid from $(0,0)$ to $(m,n)$, compare Example~\ref{ex:right LCM I(S) algorithm}.
\end{remark}

\begin{lemma}\label{lem:permute in prod of irreds-new}
Suppose that all coconnected components of $\Gamma$ are edge-free and mutually nonisomorphic, and that $S$ has balanced factorization.

\begin{enumerate}[(i)]
\item Assume in addition that all coconnected components of $\Gamma$ are finite. If $s_1,\ldots,s_n \in \CI(S)$ and $\sigma$ is a permutation of $\{1,\ldots,n\}$, then there exist $t_1,\ldots,t_n \in \CI(S)$ with $s_1s_2\cdots s_n \sim t_1\cdots t_n$ and $i(t_k) = i(s_{\sigma(k)})$ for all $k$. 
\item Let $m,n \geq 1$. Whenever $s_1,\ldots,s_m,t_1,\ldots,t_n \in \CI(S)$ satisfy $s_1\cdots s_m \sim t_1\cdots t_n$, then $m=n$ and there is a permutation $\sigma$ of $\{1,\ldots,n\}$ such that $i(t_\ell) = i(s_{\sigma(\ell)})$ for every $1\leq \ell \leq n$.
\end{enumerate}
\end{lemma}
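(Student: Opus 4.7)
The plan is to establish (ii) first (it does not require finiteness), then use it in the inductive proof of (i) by reducing to adjacent transpositions and handling the base $n=2$ via a pigeonhole argument on the finite vertex sets $V_{i(s)}$.

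For (ii), I would apply the algorithm of Remark~\ref{rem:rLCM for products built from I(S)} to $s := s_1\cdots s_m$ and $t := t_1\cdots t_n$. Since $s\sim t$ implies $s\Cap t$, the algorithm terminates without $\perp$ and yields $sS\cap tS = s\,t_1^{(m)}\cdots t_n^{(m)}\,S$; the hypothesis also gives $sS\cap tS = saS$ for some $a\in S_c$, so $t_1^{(m)}\cdots t_n^{(m)}$ and $a$ generate the same right ideal, hence $t_1^{(m)}\cdots t_n^{(m)}\in S_c$ (units are in $S_c$). A preliminary observation I would need is that $x_1\cdots x_n\in S_c$ forces each $x_j\in S_c$: one obtains $x_1\in S_c$ by hereditary-ness (Lemma~\ref{lem:rLCM if core eq}), and then for arbitrary $r\in S$ the test element $w:=x_1 r$ combined with $x_1\cdots x_n\Cap w$ and left cancellation of $x_1$ yields $x_2\cdots x_n\Cap r$, so $x_2\cdots x_n\in S_c$; iterate. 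Hence every $t_\ell^{(m)}$ and every $s_k^{(n)}$ lies in $S_c$. Since only A-steps can turn a $\CI(S)$-entry into an $S_c$-entry (while B, C, D, E preserve the $\CI(S)/S_c$ status per row and per column), each column $\ell$ transitions at a unique row $k_\ell$, the A-step at position $(k_\ell-1,\ell-1)$ simultaneously transitions that row, and these prescriptions define inverse bijections, forcing $m=n$. Tracking component preservation through the non-A steps (using balanced factorization for B and Lemma~\ref{lem:alpha on I(S)} for C and D) together with the same-component requirement at the A-step gives $i(s_{k_\ell})=i(t_\ell)$, and $\sigma(\ell):=k_\ell$ is the desired permutation.

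For (i), I would induct on $n$; the base $n=1$ is trivial. The combinatorial crux is $n=2$ with $i(s_1)\neq i(s_2)$ (if $i(s_1)=i(s_2)$, take $t_j=s_j$). Define $\pi\colon V_{i(s_2)}\to V_{i(s_2)}$ by $\pi([x]) = [x^*]$, where $s_1 x^* = xy$ is the balanced factorization of $s_1, x$ (with $x^*\in V_{i(s_2)}$ and $y\in V_{i(s_1)}$). Well-definedness is an ideal computation using Lemma~\ref{lem:cap with core}: if $x\sim x'$ with $xa=x'b$, $a,b\in S_c$, then $s_1 S\cap xaS = s_1 x^* a^{(x)} S$ and $s_1 S\cap x'bS = s_1(x')^* b^{(x')}S$ for suitable $a^{(x)}, b^{(x')}\in S_c$, and $xa=x'b$ forces $x^* a^{(x)} S = (x')^* b^{(x')} S$, hence $x^*\sim (x')^*$. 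Injectivity follows from edge-freeness: $\pi([x])=\pi([x'])$ unfolds (after left-multiplying by $s_1$) to $xs_1^{(x)}\sim x's_1^{(x')}$, hence $x\Cap x'$; inside an edge-free coconnected component, $\Cap$ is $\sim$. Finiteness of $V_{i(s_2)}$ promotes $\pi$ to a bijection, so there exists $z$ with $\pi([z])=[s_2]$, yielding $z^* c = s_2 c'$ for some $c, c'\in S_c$. Then $s_1 s_2 c' = s_1 z^* c = z\cdot(yc)$ exhibits $s_1 s_2 \sim z\cdot(yc)$, and $yc\in\CI(S)$ with $[yc]\in V_{i(s_1)}$ since noncore irreducibility is preserved by right multiplication with $S_c$ and $y\sim yc$. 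Set $t_1:=z$, $t_2:=yc$.

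For $n\geq 3$, any permutation decomposes into adjacent transpositions, so it suffices to swap at positions $(k,k+1)$. Applying the $n=2$ case to $s_k s_{k+1}$ yields $s_k s_{k+1} c' = zw$ with $c'\in S_c$, $z\in V_{i(s_{k+1})}$, $w\in V_{i(s_k)}$. Lemma~\ref{lem:cap with core} applied to $c'\in S_c$ and $s_{k+2}\cdots s_n\in S$ produces $u_{\mathrm{end}}\in S$ and $c_{\mathrm{end}}\in S_c$ with $c' u_{\mathrm{end}} = s_{k+2}\cdots s_n\,c_{\mathrm{end}}$, hence $s_1\cdots s_n\,c_{\mathrm{end}} = (s_1\cdots s_{k-1})\,zw\,u_{\mathrm{end}}$ and therefore $s_1\cdots s_n \sim (s_1\cdots s_{k-1})\,zw\,u_{\mathrm{end}}$. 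If $k=n-1$ (empty tail) we are done; otherwise noncore factorability gives $u_{\mathrm{end}}\sim v_1\cdots v_p$ for some $v_j\in\CI(S)$, and part (ii) just proved forces $p=n-k-1$ with the multiset $\{i(v_j)\}$ equal to $\{i(s_{k+2}),\ldots,i(s_n)\}$. Since $p<n$, the inductive hypothesis of (i) applied to $v_1\cdots v_p$ with a permutation sorting its components into $i(s_{k+2}),\ldots,i(s_n)$ yields $w_1,\ldots,w_p\in\CI(S)$ with $v_1\cdots v_p\sim w_1\cdots w_p$ and $i(w_j)=i(s_{k+1+j})$. Since left multiplication by $(s_1\cdots s_{k-1})\,zw$ preserves $\sim$, setting $t_j:=s_j$ for $j<k$, $t_k:=z$, $t_{k+1}:=w$, and $t_{k+1+j}:=w_j$ delivers the required tuple. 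The main obstacle will be the ideal bookkeeping behind well-definedness and injectivity of $\pi$ in the $n=2$ step, together with the careful propagation of the core cofactor through the tail, which forces a simultaneous use of noncore factorability, part (ii), and the inductive instance of (i).
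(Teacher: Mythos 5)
Your proof of part (ii) is essentially the paper's: run the algorithm of Remark~\ref{rem:rLCM for products built from I(S)}, observe that the terminal row and column outputs lie in $S_c$ (with your preliminary observation that a product in $S_c$ has all factors in $S_c$), note that only process \boxed{A} converts a $\CI(S)$-entry into a core entry while the other processes preserve coconnected components (balanced factorization for \boxed{B}, Corollary~\ref{cor:alpha permutes co-conn comps} for \boxed{C} and \boxed{D}), and read off the permutation from the positions of the \boxed{A}-steps. Likewise, your handling of a single adjacent transposition in (i) -- the map $[x]\mapsto[x^*]$ on $V_{i(s_{k+1})}$ furnished by balanced factorization, injective by edge-freeness, bijective by finiteness, then chosen to hit $[s_{k+1}]$ -- coincides with the paper's argument.

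The gap is in how you dispose of the tail $s_{k+2}\cdots s_n$ after the swap. You absorb the core cofactor $c'$ into the whole tail at once, arriving at $s_1\cdots s_n\sim(s_1\cdots s_{k-1})\,zw\,u_{\mathrm{end}}$, and then must refactor $u_{\mathrm{end}}$ into noncore irreducibles; for this you invoke noncore factorability, which is \emph{not} among the hypotheses of the lemma (it only appears later as condition (iii) of Theorem~\ref{thm:char existence of the GS}), so as written you prove a strictly weaker statement. Moreover, even granting that assumption, your appeal to part (ii) for $v_1\cdots v_p$ versus $s_{k+2}\cdots s_n$ is not licensed: core equivalence is a right-multiplication relation, and what you actually have is $c'v_1\cdots v_p\sim s_{k+2}\cdots s_n$, an identification only after \emph{left} multiplication by the core element $c'$. (This second point is repairable, since $c'v_1\in\CI(S)$ with $i(c'v_1)=i(v_1)$ by Lemma~\ref{lem:alpha on I(S)} and Corollary~\ref{cor:alpha permutes co-conn comps}, so (ii) may be applied to $c'v_1,v_2,\ldots,v_p$.) The cleaner fix -- and the paper's route -- avoids refactoring altogether: push the single core element $a$ (your $c'$) through the tail one factor at a time, using Lemma~\ref{lem:cap with core} to write $aS\cap s_{k+2}S=at_{k+2}S$, $at_{k+2}=s_{k+2}a_{k+2}$ with $a_{k+2}\in S_c$, Lemma~\ref{lem:alpha on I(S)} to see $t_{k+2}\in\CI(S)$, and Corollary~\ref{cor:alpha permutes co-conn comps} (mutually nonisomorphic components) to get $i(t_{k+2})=i(s_{k+2})$; iterating yields $t_1\cdots t_n=s_1\cdots s_na_n\sim s_1\cdots s_n$ with no appeal to noncore factorability, to part (ii), or to an induction on $n$.
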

\begin{proof}
For (i), let $s_1,\ldots,s_n \in \CI(S), s:=s_1\cdots s_n$. Suppose that $\sigma_1,\sigma_2$ are permutations of $\{1,\ldots,n\}$ and $t_1^{(1)},\ldots,t_n^{(1)},t_1^{(2)},\ldots,t_n^{(2)} \in \CI(S)$ satisfy 
\begin{enumerate}[(1)]
\item $s\sim t_1^{(1)}\cdots t_n^{(1)} \text{ with } i(t_k^{(1)}) =i(s_{\sigma_1(k)}) \text{ for all }k$; and
\item $t_1^{(1)}\cdots t_n^{(1)} \sim t_1^{(2)}\cdots t_n^{(2)} \text{ with } i(t_k^{(2)}) =i(t_{\sigma_2(k)}^{(1)}) \text{ for all }k$.
\end{enumerate}
Then we deduce that $t_1^{(2)},\ldots,t_n^{(2)} \in \CI(S)$ satisfies $s\sim t_1^{(2)}\cdots t_n^{(2)} \text{ with } i(t_k^{(2)}) =i(t_{\sigma_2(k)}^{(1)}) = i(s_{\sigma_1\sigma_2(k)}) \text{ for all }k$. It therefore suffices to prove (i) for the Coxeter-Moore generators $\tau_k:=(k,k+1)$, where $1\leq k\leq n-1$, because every permutation is a finite product of these. So let $1\leq k \leq n-1$ and consider $\tau_k$ for $s:=s_1\cdots s_n$ with $s_1,\ldots,s_n \in \CI(S)$. If $i(s_k) =i(s_{k+1})$ holds, then $t_\ell:=s_\ell$ for all $\ell$ is a solution. In the case of $i(s_k) \neq i(s_{k+1})$, we start by choosing $t_j:=s_j$ for $1\leq j \leq k-1$. At step $k$, we need to invoke our assumptions: 

For every $t\in \CI(S)$ with $i(t)=i(s_{k+1})\neq i(s_k)$, there is an edge $([t],[s_k])$ in $\Gamma$, that is, $t\Cap s_k$. Thus balanced factorization yields elements $s_t,t' \in \CI(S)$ with $s_kS\cap tS = s_kt'S, s_kt'=ts_t$ and $i(s_t)=i(s_k), i(t')=i(t)=i(s_{k+1})$. For all $r,t \in \CI(S)$ with $i(t)=i(r)=i(s_{k+1})$ and $[t]\neq[r]$, edge-freeness of the coconnected components implies $t\perp r$. Therefore we get $s_kt' = ts_t \perp rs_r=s_kr'$, which is equivalent to $t'\perp r'$ by left cancellation. Hence $[t']$ and $[r']$ are distinct vertices from the coconnected component $\Gamma_{i(s_{k+1})}$. Since the vertex set of $\Gamma_{i(s_{k+1})}$ is finite, the map $t\mapsto t'$ is a bijection, so that there is $t\in \CI(S)$ with $i(t)=i(s_{k+1})$ and $[t']=[s_{k+1}]$. Every such $t$ satisfies $s_ks_{k+1} \sim s_kt'=ts_t$, say $s_ks_{k+1}S \cap ts_tS= s_ks_{k+1}aS, s_ks_{k+1}a=ts_tb$ for some $a,b \in S_c$. By setting $(t_k,t_{k+1}):= (t,s_tb)$ for such a $t$ which is fixed from now on (the class $[t]$ is uniquely determined), we have $s_ks_{k+1}a = t_kt_{k+1}$ for some $a \in S_c$ and $i(t_k)=i(s_{k+1})$. Since $t_{k+1}=s_tb \sim s_t$ and $i(s_t)=i(s_k)$, we also get $i(t_{k+1})=i(s_k)$. 

Using left cancellation, $t_j=s_j$ for $1\leq j\leq k-1$, and $t_kt_{k+1} = ts_tb = s_ks_{k+1}a$, we arrive at 
\[t_1\cdots t_{k+1}S \cap s_1\cdots s_nS = s_1\cdots s_{k+1}(aS\cap s_{k+2}\cdots s_nS).\]
Note that this also entails $t_1\cdots t_{k+1} \Cap s_1\cdots s_n$ due to $a\in S_c$. Since the coconnected components are mutually nonisomorphic, it follows from Corollary~\ref{cor:alpha permutes co-conn comps} that $aS\cap s_{k+2}S= at_{k+2}S, at_{k+2}=s_{k+2}a_{k+2}$ for some $a_{k+2} \in S_c$ and $t_{k+2} \in \CI(S)$ with $i(t_{k+2})=i(s_{k+2})$. Repeating this procedure for $(a_j,s_j)$ with $k+3\leq j \leq n$ allows us to arrive at a set of elements $t_{k+3},\ldots,t_n \in \CI(S)$ with $i(t_j)=i(s_j)$ for all $j\geq k+2$ and
\[\begin{array}{lcl}
t_1\cdots t_n &=& s_1\cdots s_{k+1}at_{k+2}t_{k+3}\cdots t_n \\
&=& s_1\cdots s_{k+1}s_{k+2}a_{k+2}t_{k+3}\cdots t_n \\
&\vdots&\\
&=& s_1\cdots s_na_n \\
&\sim& s_1\cdots s_n.
\end{array}\]
This completes the proof of (i) as $i(t_j) = i(s_{\tau_k(j)})$ holds for all $1\leq j \leq n$ with this choice of $t_1,\ldots,t_n\in \CI(S)$.

For (ii), let $s:=s_1\cdots s_m$ and $t:=t_1\cdots t_n$ and suppose $s\sim t$. Then $sS\cap tS = saS, sa=tb$ for some $a,b \in S_c$, see Lemma~\ref{lem:rLCM if core eq}. Thus Remark~\ref{rem:rLCM for products built from I(S)} states that $sS\cap tS= st_1^{(m)}\cdots t_n^{(m)}S = saS$, which forces $t_1^{(m)}\cdots t_n^{(m)} \in S_c$. As $S_c$ is hereditary, see Lemma~\ref{lem:rLCM if core eq}, we get $t_\ell^{(m)} \in S_c$ for every $1\leq \ell \leq n$. Recall from Remark~\ref{rem:rLCM for products built from I(S)} that $t_\ell^{(0)}=t_\ell \in \CI(S) \subset S\setminus S_c$, and that $t_\ell^{(k)} \in S_c$ implies $t_\ell^{(k')} \in S_c$ for all $k \leq k' \leq m$ since the flowchart will always end up in process \boxed{C} or \boxed{E} for these cases. It follows that, for each $1\leq \ell \leq n$, there is a unique minimal $k_\ell\geq 1$ such that $t_\ell^{(k_\ell)} \in S_c$, but $t_\ell^{(k)} \notin S_c$ for $0\leq k<k_\ell$. 

We will now argue that this entails a bijection $\{1,\ldots,m\} \to \{1,\ldots,n\}$: According to Remark~\ref{rem:rLCM for products built from I(S)}, the only process that leads to an output $t_\ell^{(k_\ell)}$ in the core starting from $t_\ell^{(k_\ell-1)} \in \CI(S)$ is \boxed{A}. Thus we must have $t_\ell^{(k_\ell-1)} \sim s_{k_\ell}^{(\ell-1)}$ for all $\ell$. In particular, this implies that 
\[s_{k_\ell}^{(\ell-1)} \in \CI(S) \quad \text{with } i(t_\ell^{(k_\ell-1)}) = i(s_{k_\ell}^{(\ell-1)}).\] Moreover, it follows that the map $\sigma\colon\{1,\ldots,m\} \to \{1,\ldots,n\}, \ell\mapsto k_\ell$ is injective: For every $\ell' > \ell$, we have $s_{k_\ell}^{(\ell'-1)} \in S_c$ because of $s_{k_\ell}^{(\ell)} \in S_c$. Therefore, the right LCM of $s_{k_\ell}^{(\ell'-1)}$ and $t_{\ell'}^{(k_\ell-1)}$ will be obtained either through process \boxed{D} or \boxed{E}. In the first case we get  $t_{\ell'}^{(k_\ell)}\notin S_c$, while the latter case requires $t_{\ell'}^{(k_\ell-1)} \in S_c$, which forces $k_{\ell'}< k_\ell$ due to minimality $k_{\ell'}$. 

Switching the role of the $s_i$ and the $t_j$, we get an injection $\sigma'\colon\{1,\ldots,n\} \to \{1,\ldots,m\}$, $k\mapsto \ell_k$. Together with the injective map $\sigma$, this shows $m=n$. Moreover, $s_{k_\ell}^{(\ell-1)} \in \CI(S)$ satisfies $i(t_\ell^{(k_\ell-1)}) = i(s_{k_\ell}^{(\ell-1)})$, allowing us to deduce $\ell_{k_\ell} = \ell$, that is, $\sigma'\circ \sigma = \id$.

Next, let us note that
\begin{enumerate}[(a)]
\item process \boxed{B} preserves the coconnected components due to balanced factorization, that is, $i(s^{(\ell)}_{k+1}) = i(s^{(\ell+1)}_{k+1})$ and $i(t^{(k)}_{\ell+1}) = i(t^{(k+1)}_{\ell+1})$; and that
\item processes \boxed{C} and \boxed{D} preserve the coconnected component of the respective noncore irreducible element due to mutually nonisomorphic coconnected components, see Corollary~\ref{cor:alpha permutes co-conn comps}.
\end{enumerate}
Since only the processes \boxed{B},\boxed{C}, or \boxed{D} have occured for computing the right LCM of $s_k^{(\ell-1)}$ and $t_\ell^{(k-1)}$ with $k< k_\ell$, and then \boxed{A} takes place for $k=k_\ell-1$, we deduce from (a) and (b) that 
\[ i(t_\ell) = i(t_\ell^{(k_\ell-1)}) = i(s_{k_\ell}^{(\ell-1)}) = i(s_{k_\ell})= i(s_{\sigma(\ell)}) \quad\text{holds for all } 1\leq \ell\leq n.\]
\end{proof}

\begin{thm}\label{thm:char existence of the GS}
A right LCM monoid $S$ admits a generalized scale if and only if the following conditions hold:
\begin{enumerate}[(i)]
\item The family $(\lvert V_i\rvert)_{i \in I} \subset \N^\times \cup \{\infty\}$ freely generates a nontrivial submonoid of $\N^\times$.
\item The coconnected components $\Gamma_i$ are edge-free.
\item $S$ is noncore factorable.
\item $S$ has balanced factorization.
\end{enumerate}
If $S$ satisfies (i)--(iv), then the generalized scale $N\colon S \to \N^\times$ is determined by $N_s := \lvert V_i \rvert$ for $[s] \in V_i$.
\end{thm}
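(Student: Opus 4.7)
The proof splits into necessity and sufficiency, with the central observation in both directions that the coconnected components of $\Gamma(S)$ are precisely the $\sim$-fibers of $N$ restricted to $\CI(S)$.

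For necessity, Proposition~\ref{prop:indec = preimages of Irr(N(S))} identifies $\CI(S)$ with $N^{-1}(\Irr N(S))$; Lemma~\ref{lem:s Cap t for i(s) neq i(t)} supplies all cross-fiber edges of $\Gamma$, and Definition~\ref{def:gen scale}(ii) forbids within-fiber edges. This yields (ii), and Definition~\ref{def:gen scale}(i) together with Proposition~\ref{prop:A3 gives A4} yields (i). Condition (iii) is Lemma~\ref{lem:equiv prod of nc irreds}; for (iv), given $s,t \in \CI(S)$ with $sS\cap tS = st'S = ts'S$, Proposition~\ref{prop:ABLS3.6}(iv) combined with the free-monoid structure of $N(S)$ forces $N_{s'}=N_s$ and $N_{t'}=N_t$, placing $s',t'$ in the prescribed fibers.

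For sufficiency the plan is to construct $N$ by the stated formula and verify it is a generalized scale. Assumption (i) forces the $|V_i|$ to be finite, $\geq 2$, and pairwise distinct, so under (ii) the coconnected components are pairwise nonisomorphic. Set $N_s := 1$ for $s \in S_c$ and $N_s := \prod_j |V_{i(s_j)}|$ for any noncore factorization $s \sim s_1\cdots s_k$ provided by (iii); well-definedness is Lemma~\ref{lem:permute in prod of irreds-new}(ii). As $\sim$ is preserved under multiplication on both sides (one side from Lemma~\ref{lem:thebijectionsg}, the other a direct consequence of Lemma~\ref{lem:cap with core}), concatenation of factorizations makes $N$ a nontrivial homomorphism. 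It remains to verify Definition~\ref{def:gen scale}(i)--(iii) for this $N$.

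For Definition~\ref{def:gen scale}(iii) I would induct on the length of $n$ to reduce to the sub-claim that for every $r \in S$ and every $i \in I$ there is $u \in \CI(S) \cap V_i$ with $r \Cap u$; the sub-claim goes by induction on the length of $r$'s noncore factorization, with bases ($r \in S_c$ or $r \in \CI(S)$) immediate from the coconnected structure, and the inductive step $r = u_0 r_0$ invoking balanced factorization together with bijectivity of the induced map $[u]\mapsto[u']$ on $V_i$ (injective by edge-freeness, surjective by finiteness of $V_i$), which lets one prescribe the residual $u'$ to match the target produced by the inner inductive hypothesis applied to $r_0$. For Definition~\ref{def:gen scale}(ii), when $N_s = N_t$ I would run the algorithm of Remark~\ref{rem:rLCM for products built from I(S)}: either it terminates with $s \perp t$, or it yields a right LCM whose $s$-right-factor $R_s$ must satisfy $N_{R_s} = \mathrm{lcm}(N_s,N_t)/N_s = 1$ by Proposition~\ref{prop:ABLS3.6}(iv) and free generation, hence $R_s \in S_c$ and $s \sim t$. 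For Definition~\ref{def:gen scale}(i) I would fix representatives $\tilde V_i \subset \CI(S)$ of each $V_i$ and parameterize $N^{-1}(n)/_\sim$ by tuples in $\prod_i \tilde V_i^{k_i}$ via products in a fixed component order; surjectivity is (iii) together with Lemma~\ref{lem:permute in prod of irreds-new}(i), and injectivity follows by induction on total length, where Definition~\ref{def:gen scale}(ii) (already verified) combined with edge-freeness pins down the leading factor and left cancellation reduces to a shorter tuple. The hard part will be this final injectivity, because Lemma~\ref{lem:permute in prod of irreds-new}(i) can shift representatives within a component via the $\alpha$-action when rearranging, so the fixed-order product must be set up carefully to make the parameterization well-posed.
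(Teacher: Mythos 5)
Your necessity argument and the basic construction of $N$ (define $N|_{S_c}=1$ and $N_s=\prod_j\lvert V_{i(s_j)}\rvert$ via (iii), well-defined by Lemma~\ref{lem:permute in prod of irreds-new}(ii)) coincide with the paper's. The genuine gap is in your verification of Definition~\ref{def:gen scale}(ii), which is the hard core of the sufficiency direction: you invoke Proposition~\ref{prop:ABLS3.6}(iv) for the constructed $N$, but that proposition is a property of generalized scales, and at that stage $N$ is only known to be a homomorphism -- whether the $N$-value of a right LCM is the least common multiple of the $N$-values is essentially equivalent to what you are trying to prove, so the step is circular. Nothing established so far lets you compute $N_{R_s}$: running the algorithm of Remark~\ref{rem:rLCM for products built from I(S)} on arbitrary factorizations of $s$ and $t$ only shows that factors killed by process \boxed{A} are paired componentwise; it does not force the residue to lie in $S_c$. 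The paper closes exactly this hole by first using Lemma~\ref{lem:permute in prod of irreds-new}(i) to replace $t$ by a core-equivalent product whose factors match the components of those of $s$ in order, and then analyzing the diagonal squares of the algorithm via Corollary~\ref{cor:alpha permutes co-conn comps} (available since (i) makes the components mutually nonisomorphic) and edge-freeness, so that each diagonal square \eqref{eq:right LCM for prod of irred same length} yields either $\perp$ (whence $s\perp t$) or $\sim$ (and if all do, $s\sim t$). This argument, or a substitute for it, is missing from your proposal.

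Two further points. Your justification of the homomorphism property rests on the claim that $\sim$ is preserved under multiplication on both sides; right multiplication does \emph{not} preserve core equivalence: in $\N\rtimes\N^\times$ one has $(0,2)\sim(2,2)$ but $(0,2)(1,3)=(2,6)\not\sim(4,6)=(2,2)(1,3)$, so $st$ need not be core equivalent to the concatenation of chosen factorizations. The identity $N_{st}=N_sN_t$ is still true, but it has to be obtained by commuting the core elements coming from the factorization of $s$ past the noncore irreducible factors of $t$ via Lemma~\ref{lem:cap with core} and Lemma~\ref{lem:alpha on I(S)}, using that the $\beta$-action of Proposition~\ref{prop:alpha restricts to co-conn comp} preserves the cardinalities $\lvert V_i\rvert$. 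Finally, in your count for Definition~\ref{def:gen scale}(i) the delicate half is surjectivity rather than injectivity: after Lemma~\ref{lem:permute in prod of irreds-new}(i) the factors of $t$ lie in the correct components but are not your fixed representatives, and matching them requires the iterated application of the bijections $\alpha_a|_{V_i}$ from Corollary~\ref{cor:alpha permutes co-conn comps}, as in the paper; you flag the issue but do not resolve it. Your inductive sub-claim for Definition~\ref{def:gen scale}(iii) is essentially sound and parallels the paper's use of balanced factorization, so the decisive missing piece is the Definition~\ref{def:gen scale}(ii) step described above.
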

\begin{proof}
Suppose first that there is a generalized scale $N$ on $S$. For $s,t \in \CI(S)$ with $s \not\sim t$, Proposition~\ref{prop:ABLS3.6}(ii) shows $([s],[t]) \notin E$ whenever $N_s=N_t$. Thus the coconnected component $\Gamma_i=(V_i,E_i)$ with $[s] \in V_i$ satisfies $N^{-1}(N_s)/_\sim \subset V_i$. On the other hand, if $N_s \neq N_t$, then Lemma~\ref{lem:s Cap t for i(s) neq i(t)} shows $s \Cap t$, that is, $([s],[t]) \in E$, because $N_s,N_t \in \text{Irr}~N(S)$ by Proposition~\ref{prop:indec = preimages of Irr(N(S))}. Thus we see that the coconnected components of $\Gamma(S)$ are given by $((N^{-1}(n)/_\sim,\emptyset))_{n \in \text{Irr}~N(S)}$. This shows (ii) and also (i), as $\text{Irr}~N(S)$ freely generates $N(S)$ by Proposition~\ref{prop:A3 gives A4}.

Property (iii) is a direct consequence of Lemma~\ref{lem:equiv prod of nc irreds}. Concerning (iv), if $s,t \in \CI(S)$ satisfy $N_s \neq N_t$, then $s\Cap t$ by Proposition~\ref{prop:indec = preimages of Irr(N(S))} and Lemma~\ref{lem:s Cap t for i(s) neq i(t)}. Due to Proposition~\ref{prop:ABLS3.6}(iv) and Proposition~\ref{prop:A3 gives A4}, we have $sS\cap tS = st'S, st'=ts'$ for some $s',t' \in S$ satisfying $N_{st'} = N_sN_t$. This yields $N_{s'}=N_s$ and $N_{t'}=N_t$, and hence (iv).

Conversely, suppose that $S$ satisfies properties (i)--(iv). We first note that (i) forces $\lvert V_i\rvert \geq 2$ for all $i \in I$ and that these are all mutually distinct. 
Using (i) and (iii), we define $N\colon S \to \N^\times$ by $N|_{S_c} =1$ and $s\sim s_1\cdots s_m \mapsto \prod_{k=1}^m \lvert V_{i(s_k)}\rvert$, where $s_k \in \CI(S)$ for all $k$. If $s\sim t_1\cdots t_n$ with $t_\ell \in \CI(S)$ for all $\ell$, then Lemma~\ref{lem:permute in prod of irreds-new}(ii) implies that $m=n$ and that there is a permutation $\sigma$ of $\{1,\ldots,m\}$ such that $i(t_\ell) = i(s_{\sigma(\ell)})$ for all $1\leq \ell \leq m$. In particular, we get $\prod_{k=1}^m \lvert V_{i(s_k)}\rvert = \prod_{\ell=1}^n \lvert V_{i(t_\ell)}\rvert$, and hence $N$ is a well-defined map. It is then easy to see that it is in fact a homomorphism of monoids. 

Suppose $s,t' \in S$ satisfy $N_s=N_{t'}$, and let $N_s=n_1\cdots n_m, n_k=\lvert V_{i_k}\rvert$, which is the unique factorization by (i). Due to (iii), $s\sim s_1\cdots s_m$ and $t'\sim t_1'\cdots t_m'$ for some $s_k,t'_k \in \CI(S)$. In addition, there exists a permutation $\sigma$ of $\{1,\ldots,m\}$ such that $i(s_k)=i(t'_{\sigma(k)})$ for all $k$. Invoking Lemma~\ref{lem:permute in prod of irreds-new}(i), we find $t_1,\ldots,t_m \in \CI(S)$ with $i(t_k)=i(t'_{\sigma(k)})=i(s_k)$ for all $k$ and $t' \sim t_1'\cdots t_m' \sim t:= t_1\cdots t_m$. We can now apply the algorithm of Remark~\ref{rem:rLCM for products built from I(S)} to $s$ and $t$. For $m=4$ with $s\sim t$, the output diagram from Remark~\ref{rem:rLCM for products built from I(S)} would be
\[\scalebox{1}{
\xymatrix{
\ar_(.6){t_4}[d] & \ar@{.>}^(.6){s_1^{(4)}}[l] \ar_(.6){t_4^{(1)}}[d] & \ar@{.>}^(.6){s_2^{(4)}}[l] \ar_(.6){t_4^{(2)}}[d] & \ar@{.>}^(.6){s_3^{(4)}}[l] \ar_(.6){t_4^{(3)}}[d] & \ar@{.>}^(.6){s_4^{(4)}}[l] \ar@{.>}_(.6){t_4^{(4)}}[d]\\
\ar_(.6){t_3}[d] & \ar@{.>}^(.6){s_1^{(3)}}[l] \ar_(.6){t_3^{(1)}}[d] & \ar@{.>}^(.6){s_2^{(3)}}[l] \ar_(.6){t_3^{(2)}}[d] & \ar@{.>}^(.6){s_1^{(3)}}[l] \ar_(.6){t_3^{(1)}}[d] & \ar@{.>}^(.6){s_3^{(3)}}[l] \ar@{.>}_(.6){t_3^{(3)}}[d]\\
\ar_(.6){t_2}[d] & \ar@{.>}^(.6){s_1^{(2)}}[l] \ar_(.6){t_2^{(1)}}[d] & \ar@{.>}^(.6){s_2^{(2)}}[l] \ar@{.>}_(.6){t_2^{(2)}}[d] & \ar^(.6){s_3^{(2)}}[l] \ar@{.>}_(.6){t_2^{(3)}}[d] & \ar^(.6){s_4^{(2)}}[l] \ar@{.>}_(.6){t_2^{(4)}}[d]\\
\ar_(.6){t_1}[d] & \ar@{.>}^(.6){s_1^{(1)}}[l] \ar@{.>}_(.6){t_1^{(1)}}[d] & \ar^(.6){s_2^{(1)}}[l] \ar@{.>}_(.6){t_1^{(2)}}[d] & \ar^(.6){s_3^{(1)}}[l] \ar@{.>}_(.6){t_1^{(3)}}[d] & \ar^(.6){s_4^{(1)}}[l] \ar@{.>}_(.6){t_1^{(4)}}[d]\\
									& \ar^(.6){s_1}[l]& \ar^(.6){s_2}[l] & \ar^(.6){s_3}[l] & \ar^(.6){s_4}[l]
}}\]

where dashed arrows refer to $S_c$ while solid arrows refer to $\CI(S)$. More precisely, we get $s\perp t$, unless $s_1\sim t_1$ due to $i(s_1)=i(t_1)$ and (ii). This leads to $s_1^{(k)},t_1^{(k)} \in S_c$ for all $k\geq 1$. By iteration, we see that we only need to check that
\begin{equation}\label{eq:right LCM for prod of irred same length}
s_k^{(k-1)} \sim t_k^{(k-1)}
\end{equation}
holds for all $1\leq k \leq m$ to decide upon $s \Cap t$, as all other squares will involve (precisely) one element from $S_c$ in its lower half. This stems from the fact that if \eqref{eq:right LCM for prod of irred same length} holds for $k=1,\ldots,k_0$, then we get $s_k^{(\ell)},t_k^{(\ell)} \in S_c$ for all $1\leq k \leq \ell\leq m, k\leq k_0$. Now suppose there is $\ell\leq m$ such that \eqref{eq:right LCM for prod of irred same length} holds for all $k < \ell$. According to the diagram obtained from Remark~\ref{rem:rLCM for products built from I(S)} up to $\ell-1$, \eqref{eq:right LCM for prod of irred same length} will hold for $\ell$ if and only if 
\[ \alpha_{t_{\ell-1}^{(\ell-1)}\cdots t_1^{(\ell-1)}}^{-1}([s_\ell])=[s_\ell^{(\ell-1)}] = [t_\ell^{(\ell-1)}] = \alpha_{s_{\ell-1}^{(\ell-1)}\cdots s_1^{(\ell-1)}}^{-1}([t_\ell]).\]
For convenience, let $a:= t_{\ell-1}^{(\ell-1)}\cdots t_1^{(\ell-1)}, b:=s_{\ell-1}^{(\ell-1)}\cdots s_1^{(\ell-1)} (\in S_c)$. By (i), the coconnected components have vertex sets of mutually distinct cardinalities, so they are mutually nonisomorphic. Thus Corollary~\ref{cor:alpha permutes co-conn comps} applies, showing that $\alpha_a$ and $\alpha_b$ restrict to bijections on $V_{i(s_\ell)}$ and $V_{i(t_\ell)}$, respectively. In particular, we have $i(s_\ell^{(\ell-1)})=i(s_\ell)=i(t_\ell)=i(t_\ell^{(\ell-1)})$, so that $s_\ell^{(\ell-1)} \Cap t_\ell^{(\ell-1)}$ is in fact equivalent to $s_\ell^{(\ell-1)} \sim t_\ell^{(\ell-1)}$.

If \eqref{eq:right LCM for prod of irred same length} holds for all $k$, that is, $s\Cap t$, then the top row and the rightmost column of the diagram jointly mediate
\[s \sim s_1\cdots s_m \sim s_1\cdots s_mt_1^{(m)}\cdots t_m^{(m)} = ts_1^{(m)}\cdots s_m^{(m)} \sim t \sim t'.\]
Thus $N$ satisfies (ii) of Definition~\ref{def:gen scale}, that is, $N_s=N_{t'}$ implies $s\sim t'$ or $s\perp t'$.

A second conclusion we draw from the previous consideration is that $\lvert N^{-1}(n)/_\sim\rvert = n$ for all $n \in N(S)$: For $n=n_1\cdots n_m, n_k=\lvert V_{i_k}\rvert$, we can pick 
\[\{s_{k,j} \mid 1\leq k \leq m, 1\leq j\leq n_k\} \subset \CI(S)\] 
such that $i(s_{k,j}) = i_k$ and $s_{k,j} \perp s_{k,j'}$ for all $j\neq j'$. This already shows $\lvert N^{-1}(n)/_\sim\rvert \geq n$ as $(s_{1,j_1}\cdots s_{m,j_m})_{1\leq j_k \leq n_k} \subset N^{-1}(n)$ consists of $n$ mutually orthogonal elements. Now for $[t] \in N^{-1}(n)/_\sim$, we can assume that $t\sim t_1\cdots t_m$ with $i(t_k)=i_k$ for all $k$ (using the replacement technique for $t'\mapsto t\sim t'$ from before). We are going to find $(j_1,\ldots,j_m)$ such that $t\sim s_{1,j_1}\cdots s_{m,j_m}$: As $n_1=\lvert V_{i_1}\rvert$, there is a unique $1\leq j_1 \leq n_1$ such that $t_1 \sim s_{1,j_1}$. Therefore \eqref{eq:right LCM for prod of irred same length} holds for $k=1$ and we get $t_1^{(k)},s_{1,j_1}^{(k)} \in S_c$ for all $1\leq k \leq m$. As $\alpha_{t_1^{(k)}}\alpha_{s_{1,j_1}^{(1)}}^{-1}$ restricts to a bijection of $V_{i_2}$, see Corollary~\ref{cor:alpha permutes co-conn comps}, there is $1\leq j_2 \leq n_2$ such that $\alpha_{t_1^{(1)}}\alpha_{s_{1,j_1}^{(1)}}^{-1}([t_2]) = [s_{2,j_2}]$, which is \eqref{eq:right LCM for prod of irred same length} for $k=2$. Iterating this process, where at stage $k$ we find $1\leq j_k\leq n_k$ with
\[\alpha_{t_{k-1,j_{k-1}}^{(k-1)}\cdots t_{1,j_1}^{(1)}}\alpha_{s_{k-1,j_{k-1}}^{(k-1)}\cdots s_{1,j_1}^{(1)}}^{-1}([t_k]) = [s_{k,j_k}]\] 
that gives \eqref{eq:right LCM for prod of irred same length}. This shows $\lvert N^{-1}(n)/_\sim\rvert = n$. 

The above procedure is also useful in proving (iii) of Definition~\ref{def:gen scale}: Let $s \in S, n \in N(S)$. Without loss of generality, we may assume $s \in S\setminus S_c, n>1$ as the remaining cases are trivial due to the presence of core elements. Then (iii) asserts that $s\sim s_1\cdots s_{m_1}$ for some $s_k \in \CI(S)$ with $\lvert V_{i(s_k)}\rvert =: n_{1,k}$, while (i) gives $n= n_{2,1}\cdots n_{2,m_2}$ for uniquely determined $n_{2,k} \in \{\lvert V_i\rvert \mid i \in I\}$. By rearrangement of the factors $n_{2,k}$ and possibly switching $s\mapsto s' \sim s$ via Lemma~\ref{lem:permute in prod of irreds-new}(i), we can assume that there is $0 \leq \ell \leq m_1 \wedge m_2$ such that $n_{1,k}=n_{2,k}$ for $1\leq k \leq \ell$, while $n_{1,k}\neq n_{2,k'}$ for all $\ell < k,k'$. In plain words, we extract the greatest common divisor of $n_{1,1}\cdots n_{1,m_1}$ and $n$. We then simply set $t_k:= s_k$ for $1\leq k \leq \ell$ and fix an arbitrary choice of $t_{\ell+1},\ldots,t_{m_2}$ with $n_{2,k} = \lvert V_{i(t_k)}\rvert$ for $\ell+1\leq k\leq m_2$. Looking at Remark~\ref{rem:rLCM for products built from I(S)} for $s_{\ell+1}\cdots s_{m_1}$ and $t_{\ell+1},\ldots,t_{m_2}$, we see that due to $n_{1,k}\neq n_{2,k'}$ for all $\ell < k,k'$, we can always complete the diagram as we only apply process \boxed{B} in every step. The cause of this outcome is nothing but balanced factorization. Thus we find $t \in N^{-1}(n)$ with $s\Cap t$ as required, and we have shown that $N$ is indeed a generalized scale.
\end{proof}

\begin{remark}\label{rem:existence and uniqueness}
Due to the explicit description of the generalized scale in terms of $\Gamma(S)$, Theorem~\ref{thm:char existence of the GS} not only characterizes the existence of the generalized scale, but also implies uniqueness, which we already showed abstractly in Theorem~\ref{thm:uniqueness}.
\end{remark}

\section{Examples, challenges, and Saito's degree map}\label{sec:pushing borders}

\subsection{Self-similar group actions}\label{subsec:ex1}

Let $(G,X)$ be a self-similar group action, that is, $X$ is a finite alphabet in at least two letters and $G$ is a group acting on the free monoid $X^*$ generated by $X$ such that for each $x \in X, g \in G$, there are unique elements $g(x) \in X, g|_x \in G$ such that $g(xw) = g(x)g|_x(w)$ for all $w \in X^*$. The associated Zappa-Sz\'{e}p product $S:= X^*\bowtie G$ is the monoid with elements $X^*\times G$ and product $(v,g)(w,h) := (vg(w),g|_wh)$, where the expression $g|_w$ is defined recursively through $g|_{xu} := (g|_x)|_u$. It was observed that $S$ is a right LCM monoid in \cite{BRRW}*{Theorem~3.8}, and that $S$ admits a generalized scale, see \cite{ABLS}*{Subsection~5.3}. We can recover the second part easily from Theorem~\ref{thm:char existence of the GS}: 
\begin{enumerate}[(a)]
\item $S_c = S^*=\{ (\varnothing, g) \mid g \in G \} \cong G$.
\item An element $(w,g)$ belongs to $\CI(S)$ if and only if $w \in X$.
\item For every $(w,g) \in S\setminus S_c$, $w$ is a nontrivial word $w=x_1x_2\cdots x_n$ in $X$ of length $n\geq 1$. Thus $(w,g) = (x_1,1)(x_2,1)\cdots (x_{n-1},1)(x_n,g)$ shows that $S$ is noncore factorable.
\item For $(x,g),(y,h) \in \CI(S)$, we get $(x,g)\Cap (y,h)$ if and only if $x=y$ if and only if $(x,g) \sim (y,h)$. Thus, $\Gamma(S)$ is the empty graph on $\lvert X \rvert$ vertices $[(x,1)], x \in X$, which is coconnected.
\end{enumerate} 
Hence conditions (i)--(iv) in Theorem~\ref{thm:char existence of the GS} are satisfied and $(w,g) \mapsto \lvert X\rvert^{\ell(w)}$ defines a generalized scale, where $\ell\colon X^* \to \N$ denotes the word length with respect to $X$.

\subsection{\texorpdfstring{The ax$+$b-semigroup over the natural numbers}{The natural numbers}}\label{subsec:ex2}
A famous example of a right LCM monoid with a generalized scale is the ax$+$b-semigroup over the natural numbers $S:= \N\rtimes\N^\times$, see \cite{ABLS}*{Subsection~5.4}. This example features: 
\begin{enumerate}[(a)]
\item $S_c = \{ (n,1) \mid n \in \N\} \cong \N$.
\item An element $(m,p)$ belongs to $\CI(S)$ if and only if $p$ is a prime.
\item As every positive integer is a finite product of primes, $S$ is noncore factorable.
\item Let $(m,p),(n,q) \in \CI(S)$. If $p=q$, then $(m,p)\Cap (n,q)$ holds if and only if $m-n \in p\Z$, that is, $(m,p) \sim (n,q)$. If $p \neq q$, then $p\Z + q\Z = \text{gcd}(p,q)\Z =\Z$ and $(m,p)S \cap (n,q)S = (m+pn',pq)S, m+pn'=n+qm'$, where $m',n' \in \N$ represent the smallest nonnegative solution for $m +pn' = n+ qm'$ (the solution in $\Z$ is unique up to $pq\Z$). We thus get the coconnected components of $\Gamma(S)$ to be $(V_p)_{p \in P}$, where $P$ denotes the primes in $\N^\times$, with $V_p = \{[(k,p)] \mid 0\leq k <p\}$ and $E_p=\empty$.
\item In this example, balanced factorization mirrors the fact that the prime factorization mentioned in (c) is unique (up to permutation of factors).
\end{enumerate}
So Theorem~\ref{thm:char existence of the GS} implies that $(m,p) \mapsto p$ is the unique generalized scale on $S$. In passing, we note that Theorem~\ref{thm:char existence of the GS} and the notion of a generalized scale thereby provide a profound justification of ``the obvious choice'' made in \cite{LR2} as described in \cite{BLRS}*{Remark~2.4}.

\subsection{Algebraic dynamical systems}\label{subsec:ex3}
Let $(G,P,\theta)$ be an algebraic dynamical system as considered in \cite{BLS2}, that is, $P$ is a right LCM monoid acting upon a discrete group $G$ by injective group endomorphisms $\theta_p$ such that $pP\cap qP = rP$ implies $\theta_p(G) \cap \theta_q(G) = \theta_r(G)$. The right LCM monoid of interest here is $S := \gxp$, and we assume that $p \in P^*$ whenever $\theta_p \in \Aut(G)$\footnote{One may always achieve this by replacing $P$ by the acting semigroup of endomorphisms.}. 

\begin{remark}\label{rem:I(GxP)=GxI(P)}
The set $\CI(\gxp)$ is given by $G\times \CI(P)$. It is shown in \cite{ABLS}*{Proposition~5.11(i)} that $S$ admits a generalized scale (given by $N_{(g,p)} := [G:\theta_p(G)]$) provided that:
\begin{enumerate}[(a)]
\item The index $N_p:= [G:\theta_p(G)]$ is finite for all $p \in P$, and $N_p>1$ for some $p \in P$.
\item If $N_p=N_q$ for some $p,q \in P$, then $p \in qP^*$.
\end{enumerate}
\end{remark}

It is easy to see from Theorem~\ref{thm:char existence of the GS}(i) and (iii) that part (a) is also necessary: Suppose first that there was $p \in P$ with infinite index. For the semigroup $S=\gxp$ to be noncore factorable, $P$ has to be noncore factorable, so $p$ would have to be expressible as a finite product $p=p_1q_2\cdots p_n$ for some $p_k \in \CI(P)$. Since the index is multiplicative, there would exist $q \in \CI(P)$ with infinite index. We observe that $(g,q) \in \CI(S)$ for all $g \in G$ with $(g,q)\sim (h,q)$ if and only if $g^{-1}h \in \theta_q(G)$. Thus the coconnected component $\Gamma_i$ of $[(g,q)]$ will contain infinitely many vertices, thereby violating condition (i) of Theorem~\ref{thm:char existence of the GS}. So $P$ must act by finite index endomorphisms. If all of them are automorphisms, then $S$ is a group and hence $\Gamma = (\{v\},\emptyset)$ will be an obstruction to nontriviality in Theorem~\ref{thm:char existence of the GS}~(i). The next example shows that part (b) is too strong in general:

\begin{example}[Freely doubled $\times 2$]\label{ex:ads with free monoid}
Let $P$ be the free monoid generated by two elements $p,q$, $G=\Z$, and $\theta_p=\theta_q = \times 2\colon \Z \to \Z$. Then $S=\gxp$ is a right LCM monoid with $\CI(S) = \{ (n,r) \mid n \in \Z, r \in \{p,q\}\}$, see Remark~\ref{rem:I(GxP)=GxI(P)}. We note that $(m,p) \perp (n,q)$ for all $m,n \in \Z$, while $(m,r) \cap (n,r)$ if and only if $m-n \in \theta_r(G) = 2\Z$, that is, $(m,r) \sim (n,r)$, for $r \in \{p,q\}$. Hence $\Gamma(S)$ is the empty graph on four vertices $[(0,p)], [(1,p)],[(0,q)], [(1,q)]$. Therefore, $\Gamma(S)$ is coconnected and conditions (i),(ii), and (iv) of Theorem~\ref{thm:char existence of the GS} follow immediately, while (iii) boils down to Remark~\ref{rem:I(GxP)=GxI(P)}, noting that $p$ and $q$ generate $P$ by construction. Thus $S$ admits a generalized scale $N$ with $N_{(g,w)} = 4^{\ell(w)}$, where $\ell$ denotes the word length on $P$ with respect to $\{p,q\}$.  
\end{example}

On the other hand, $\gxp$ may fail to have a generalized scale for $P\cong \N^2$ as soon as condition (b) from Remark~\ref{rem:I(GxP)=GxI(P)} is violated:

\begin{example}[Ledrappier's shift]\label{ex:Ledrappier shift}
Consider the right LCM monoid $S:= G \rtimes_{\sigma,\id+\sigma} \N^2$, where $G:= \bigoplus_\N \Z/2\Z$ and $\sigma$ is the right shift $(g_0,g_1\ldots) \mapsto (0,g_0,g_1,\ldots)$. In other words, $S$ is the semidirect product of the standard restricted wreath product $\Z/2\Z \wr \N$ by $\N$, where the latter action is given by $\id+\sigma$. This monoid has close ties to the well-known Ledrappier shift from \cite{Led}, see \cite{Sta2}*{Example~2.9} and \cite{ER}*{Section~11}. It is also an example of a right LCM monoid arising from an algebraic dynamical system in the sense of \cite{BLS2}. The two generators for the $\N^2$-action $\sigma$ and $\id+\sigma$ are independent\footnote{This strong form of commutativity for injective group endomorphisms of discrete groups surfaced in \cite{CuntzVershik}, and was analysed more closely in \cite{Sta1}*{Section~1} and \cite{Sta2}*{Section~2}, exhibiting a close connection to $*$-commutativity from \cite{AR}.} injective group endomorphisms whose images have index two. More precisely, we have $G = \sigma(G) \sqcup e_0+\sigma(G) = (\id+\sigma)(G) \sqcup e_0+(\id+\sigma)(G)$, where $e_k := (\delta_{k,n})_{n \in \N}$. This leads us to the following features:
\begin{enumerate}[(a)]
\item $S_c = S^* = G\times \{0\}$.
\item An element $(g,m)$ belongs to $\CI(S)$ if and only if $m \in \{\sigma,\id+\sigma\}$.
\item The semigroup $S$ is noncore factorable because $\langle \sigma,\id+\sigma\rangle \cong \N^2$, see Remark~\ref{rem:I(GxP)=GxI(P)}.
\item Let $(g,m),(h,n) \in \CI(S)$. If $m=n$, then $(g,m)\Cap (h,n)$ if and only if $g_0=h_0$ if and only if $(g,m)\sim (h,n)$. For $m\neq n$, we always get $(g,m)\Cap (h,n)$ with $(g,m)S\cap (h,n)S = (gm(h'),mn)S, gm(h') = hn(g')$ for suitable $g',h' \in G$ due to strong independence of $\sigma$ and $\id+\sigma$, that is, $\sigma(G)+(\id+\sigma)(G) = G$, see \cite{Sta1}*{Definition~1.3}. This means, $\Gamma(S)$ has two coconnected components $\Gamma_i,i=1,2$ given by $V_1 := \{[(g,\sigma)] \mid g \in \{0,e_0\}\}$ and $V_2 := \{[(g,\id +\sigma)] \mid g \in \{0,e_0\}\}$ with $E_i = \emptyset$.
\item Balanced factorization holds because of independence, see (d).
\end{enumerate}
We note that unlike for the previous examples from \ref{subsec:ex1},\ref{subsec:ex2}, and Example~\ref{ex:Z2 xp with flip}, Corollary~\ref{cor:alpha permutes co-conn comps} does not apply, but its conclusion holds nonetheless. The natural alternative to a generalized scale for $S$ is the homomorphism given by $g \mapsto 1$ for $g \in G$ and $\sigma \mapsto 2, \id+\sigma \mapsto 2$. We expect the corresponding dynamics on $C^*(S)$ to have a KMS-state structure of the same kind as we have established for right LCM monoids admitting a generalized scale, see \cite{ABLS}*{Theorem~4.3} and \cite{BLRSt}.
\end{example}

While all the previous examples satisfy the conclusion of Corollary~\ref{cor:alpha permutes co-conn comps} that $\alpha\colon S_c \curvearrowright S/_\sim$ restricts to permutations on the vertex sets of the coconnected components of $\Gamma(S)$, the following is an easy example, where this is no longer the case. As this requires the existence of two distinct isomorphic coconnected components in $\Gamma(S)$, any such example will fail condition~(i) of Theorem~\ref{thm:char existence of the GS}. It will thus not have a generalized scale, which indicates that despite the usefulness of the notion of generalized scales, we ought to consider more flexible analogues eventually.

\begin{example}[Multiplication with flip]\label{ex:Z2 xp with flip}
Let $p\in \Z, |p| \geq 2$ and 
\[p_0 := \begin{pmatrix}p&0\\0&1\end{pmatrix}, \ p_1 := \begin{pmatrix}1&0\\0&p\end{pmatrix}, \ x := \begin{pmatrix}0&1\\1&0\end{pmatrix} \in M_2(\Z).\] 
Consider $S := \Z^2 \rtimes P$, where $P\subset M_2(\Z)$ is the monoid generated by $p_0$ and $x$. Then $S$ is a right LCM monoid and $P = \langle p_0,p_1\rangle \rtimes \langle x\rangle \cong \N^2\rtimes \Z/2\Z$ with $x p_0 x = p_1$. We obtain:
\begin{enumerate}[(a)]
\item $S_c = S^* = \Z^2 \rtimes \Z/2\Z$.
\item An element $(m,q)$ belongs to $\CI(S)$ if and only if $q = p_ix^\ell$ for $i,\ell \in \{0,1\}$ as every element in $P$ admits a unique normal form $q = p_{k_1}p_{k_2}\cdots p_{k_n}x^\ell$ with $k_j,\ell \in \{0,1\}$.
\item The normal form described in (b) implies that $S$ is noncore factorable.
\item Let $(m,q_0),(n,q_1) \in \CI(S)$. If $q_i=p_kx^{\ell_i}$ for some $k,\ell_i\in \{0,1\}$, then $(m,q_0)\Cap (n,q_1)$ holds if and only if $m-n \in p_k\Z^2$, that is, $(m,q_0) \sim (n,q_1)$. On the other hand, if $q_i=p_ix^{\ell_i}$ for some $\ell_i\in \{0,1\}, i=1,2$, then $(m,q_0)\Cap (n,q_1)$ always holds as $p_1\Z^2 + p_2\Z^2 = \Z^2$. Thus $\Gamma(S)$ is given by two coconnected components $\Gamma_0$ and $\Gamma_1$ with $V_i := \{[(m,p_0)] \mid 0\leq m < p e_i\}$ for $i=0,1$, where $e_0=(1,0),e_1=(0,1)$, and $E_i=\emptyset$.
\item Balanced factorization is due to $p_0\Z^2 \cap p_1\Z^2 = p_0p_1\Z^2, p_0p_1=p_1p_0$.
\end{enumerate} 
For $[((m_0,m_1),p_0)] \in V_0$, we get $\alpha_x([((m_0,m_1),p_0)]) = [((m_1,m_0),p_1)] \in V_1$, so the flip switches the two coconnected components $\Gamma_0$ and $\Gamma_1$.
\end{example}

\subsection{Graph products}\label{subsec:ex4}
Suppose $\Lambda = (W,F)$ is an undirected graph, and $(S_w)_{w \in W}$ is a family of right LCM monoids. The graph product $S(\Lambda,(S_w)_{w \in W})$ is then defined as the quotient of the free monoid $\ast_{w \in W} S_w$ by the congruence generated by the relations $st=ts$ for $s \in S_v,t \in S_w$ with $(v,w) \in F$. Whenever there is no ambiguity concerning the vertex semigroups $(S_w)_{w \in W}$, we shall simply write $S(\Lambda)$. This construction goes back to the case of groups first considered in \cite{Gre}, and was studied for monoids in \cites{Cos,FK}. Prominent examples of this kind are right-angled Artin monoids, which have already been of interest to operator algebraists, see for instance \cites{CrispLacaT,CrispLaca,ELR,Sta4}. We would like to mention that extensive research has been conducted on such monoids under the name of \emph{trace monoids} in connection theoretical computer science, see \cite{DR}. A standard means of studying the structure of graph products is the decomposition of the graph $\Lambda$ into its \emph{coconnected components} $\Lambda_i, i \in I$. For convenience, let us recall that graph $(V,E)$ is called coconnected if the graph $(V,V\times V\setminus E)$ is connected, see also \cite{Sta4}*{Subsection~2.2}. Let $(\Lambda_j)_{j \in J}$ be the decomposition into coconnected components for $\Lambda$, so that the graph product satisfies $S(\Lambda) = \bigoplus_{j \in J} S(\Lambda_j)$. Now for each $j \in J$, we let $(\Gamma_i)_{i \in I_j}$ with $\Gamma_i = (V_i,E_i)$ denote the decomposition of $\Gamma(S(\Lambda_j))$ into its coconnected components.

\begin{thm}\label{thm:gen scales for graph products}
Let $\Lambda = (W,F)$ be an undirected graph and $(S_w)_{w \in W}$ a family of right LCM monoids. Then: 
\begin{enumerate}[(i)]
\item $\CI(S(\Lambda)) = \bigsqcup_{j \in J} \CI(S(\Lambda_j)) \oplus \bigoplus_{j' \in J\setminus\{j\}} S(\Lambda_{j'})_c$.
\item The coconnected components of $\Gamma(S(\Lambda))$ are edge-free if and only if the coconnected components of $\Gamma(S(\Lambda_j))$ are edge-free for all $j \in J$.
\item $S(\Lambda)$ is noncore factorable if and only if $S(\Lambda_j)$ is noncore factorable for all $j \in J$.
\item $S(\Lambda)$ has balanced factorization if and only if $S(\Lambda_j)$ has balanced factorization and $\alpha\colon S(\Lambda_j)_c \curvearrowright S(\Lambda_j)/_\sim$ restricts to an action on the vertex sets of the individual coconnected components for all $j \in J$.
\end{enumerate}
In particular, the graph product $S(\Lambda)$ has a generalized scale if and only if the properties (ii)--(iv) from Theorem~\ref{thm:char existence of the GS} hold for every $S(\Lambda_j), j \in J$, and $(\lvert V_i\rvert)_{i \in \bigsqcup_{j \in J} I_j} \subset \N^\times \cup \{\infty\}$ freely generates a nontrivial submonoid of $\N^\times$.
\end{thm}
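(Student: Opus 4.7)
The strategy rests on the direct product decomposition $S(\Lambda) = \bigoplus_{j \in J} S(\Lambda_j)$, under which $S(\Lambda)_c = \bigoplus_j S(\Lambda_j)_c$, core equivalence is componentwise, and $sS \cap tS$ decomposes componentwise as $\prod_j (s_jS(\Lambda_j) \cap t_jS(\Lambda_j))$. A preparatory observation, useful throughout, is that in any right LCM monoid one has $t_1t_2 \in S_c$ if and only if both $t_1, t_2 \in S_c$: hereditariness (Lemma~\ref{lem:rLCM if core eq}) gives $t_1 \in S_c$, and for $t_2$ one notes that for any $r \in S$, $(t_1t_2)\Cap (t_1r)$ combined with left cancellation yields $t_2 \Cap r$.

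For (i), if $s = (s_j)_j$ has non-core components at two distinct positions $j_1, j_2$, splitting $s$ into the element supported only at $j_1$ and the remainder provides a non-trivial factorization, obstructing noncore irreducibility. Conversely, using Lemma~\ref{lem:cap with core}, an element with exactly one non-core component $s_j \in \CI(S(\Lambda_j))$ is noncore irreducible iff $s_j$ is. This description simultaneously pins down the structure of $\Gamma(S(\Lambda))$: vertices attached to different positions $j_s \neq j_t$ are always adjacent (the componentwise intersection is nonempty everywhere, since the non-core component at one position faces a core at the other), while within a fixed position $j$, adjacency matches $\Gamma(S(\Lambda_j))$. Hence the coconnected components of $\Gamma(S(\Lambda))$ are in bijection with $\bigsqcup_{j \in J} I_j$, and (ii) follows immediately.

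For (iii), the direction $(\Leftarrow)$ lifts a factorization of each non-core $s_j$ from $S(\Lambda_j)$ to $S(\Lambda)$ via the natural embeddings, exploiting that images of different embeddings commute inside the direct product. For $(\Rightarrow)$, given $s_j \in S(\Lambda_j) \setminus S(\Lambda_j)_c$, I lift to $\tilde s := (1, \ldots, s_j, \ldots, 1)$ and factor $\tilde s \sim s^{(1)} \cdots s^{(m)}$ in $S(\Lambda)$. For any $j' \neq j$ the product $\prod_k (s^{(k)})_{j'}$ is core equivalent to $1$ and hence lies in $S(\Lambda_{j'})_c$; by the preparatory observation iterated, each factor $(s^{(k)})_{j'}$ lies in $S(\Lambda_{j'})_c$. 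Combined with (i), this forces every $j_k$ to equal $j$, so projecting to the $j$-th component produces the desired factorization of $s_j$ into noncore irreducibles.

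For (iv), given $s, t \in \CI(S(\Lambda))$ with $i(s) \neq i(t)$, a case split on whether their non-core positions $j_s, j_t$ coincide is natural. When $j_s = j_t = j$, the componentwise intersection formula directly transfers balanced factorization for $(s, t)$ in $S(\Lambda)$ to that of $(s_j, t_j)$ in $S(\Lambda_j)$, the remaining positions yielding only core elements. When $j_s \neq j_t$, the requirement $i(b') = i(t)$ translates, via Lemma~\ref{lem:cap with core} and Lemma~\ref{lem:alpha on I(S)}, to the statement that at position $j_t$ the class $\alpha_{s_{j_t}}^{-1}([t_{j_t}])$ lies in the same coconnected component of $\Gamma(S(\Lambda_{j_t}))$ as $[t_{j_t}]$. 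Because $s_{j_t}$ ranges freely over $S(\Lambda_{j_t})_c$ as $s$ varies, this is exactly the stated $\alpha$-condition. The converse reassembles these ingredients using (ii) and the core-irreducible/core dichotomy at each position. This case analysis, especially extracting the $\alpha$-condition from the mixed-position case, is the main obstacle. The ``in particular'' statement then follows by applying Theorem~\ref{thm:char existence of the GS} to $S(\Lambda)$ and translating via (i)--(iv); the $\alpha$-condition is automatic, since freeness of $(|V_i|)_{i \in \bigsqcup_j I_j}$ restricts to freeness within each $I_j$, forcing pairwise distinct cardinalities and hence, together with edge-freeness, mutually nonisomorphic coconnected components in each $\Gamma(S(\Lambda_j))$, so that Corollary~\ref{cor:alpha permutes co-conn comps} applies.
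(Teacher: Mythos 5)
Your proposal is correct and follows essentially the same route as the paper's proof: it identifies $\CI(S(\Lambda))$ via the decomposition $S(\Lambda)=\bigoplus_{j\in J}S(\Lambda_j)$, reads off that the coconnected components of $\Gamma(S(\Lambda))$ are $(\Gamma_i)_{i\in\bigsqcup_j I_j}$ (vertices at distinct positions being always adjacent), transfers noncore factorability componentwise, handles balanced factorization by the case split $j(s)=j(t)$ versus $j(s)\neq j(t)$ with the mixed case producing $[t'_{j(t)}]=\alpha_{s_{j(t)}}^{-1}([t_{j(t)}])$ and hence the $\alpha$-condition, and deduces the final claim from Theorem~\ref{thm:char existence of the GS} together with Corollary~\ref{cor:alpha permutes co-conn comps}. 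Your preparatory observation that $t_1t_2\in S_c$ forces $t_1,t_2\in S_c$ is a harmless explicit addition that the paper leaves implicit, and the remaining level of detail (including the brief reassembly in the converse of (iv)) matches the paper's own argument.
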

\begin{proof}
Part (i) is a straightforward observation using the definition of $S(\Lambda)$, and (iii) then follows easily. For $s = \sum_{j \in J} s_j \in \CI(S(\Lambda)$, we let $j(s) \in J$ be the unique index with $s_{j(s)} \in \CI(S(\Lambda_{j(s)}))$. Then $s,t \in \CI(S(\Lambda)$ satisfy $s\Cap t$ if and only if $j(s) \neq j(t)$ or $j(s)=j(t)$ and $s_{j(s)} \Cap t_{j(s)}$. Thus the coconnected components of $\Gamma(S(\Lambda))$ are given by $(\Gamma_i)_{i \in \bigsqcup_{j \in J} I_j}$. With this insight, (ii) is an immediate consequence. By (i), balanced factorization for every $j \in J$ is necessary, corresponding to the case of $s,t \in \CI(S(\Lambda))$ with $j(s) =j(t)$. For $j(s) \neq j(t)$, the description in (i) yields $sS(\Lambda) \cap tS(\Lambda) =st'S(\Lambda), st'=ts'$ with $t'_j \in S(\Lambda_j)_c$ if and only if $j\neq j(t)$ and $s'_j \in S(\Lambda_j)_c$ if and only if $j\neq j(s)$. At $j(t)$ we have $[t'_{j(t)}] = \alpha_{s_{j(t)}}^{-1} ([t_{j(t)}]) \in \CI(S(\Lambda_{j(t)}))$, and similarly for $s'$ at $j(s)$. Thus the necessary and sufficient condition for balanced factorization is obtained by adding the assumption that $\alpha\colon S(\Lambda_j)_c \curvearrowright S(\Lambda_j)/_\sim$ restricts to an action on the vertex set $V_i$ of each coconnected component $\Gamma_i, i \in I_j$, for all $j \in J$, compare Corollary~\ref{cor:alpha permutes co-conn comps}. This proves (iv), and the claim about the existence of generalized scales is now a consequence of Theorem~\ref{thm:char existence of the GS}.
\end{proof}

\begin{remark}\label{rem:cond (i) reinterpreted}
The condition that $(\lvert V_i\rvert)_{i \in \bigsqcup_{j \in J} I_j} \subset \N^\times \cup \{\infty\}$ freely generates a nontrivial submonoid of $\N^\times$ can also be expressed by saying that 
\begin{enumerate}[(a)]
\item $(\lvert V_i\rvert)_{i \in I_j} \subset \N^\times \cup \{\infty\}$ freely generates a submonoid $M_j$ of $\N^\times$ for all $j \in J$;
\item $M_j \cap M_{j'} = \{1\}$ for all $j,j' \in J, j \neq j'$; and 
\item there is $j \in J$ such that $M_j$ is nontrivial.
\end{enumerate}
In particular, we see that a necessary condition for $S(\Lambda)$ to admit a generalized scale $N$ is that $S(\Lambda_j)$ admits a generalized scale for all $j \in J$ with nontrivial $M_j$. In this case, $N$ is determined by this family of generalized scales.
\end{remark}

According to Theorem~\ref{thm:gen scales for graph products}, the study of generalized scales (or analogues thereof) on graph products can be reduced to the coconnected case. But the situation seems to be quite intricate, as the following example shows:

\begin{example}[Graph products behaving badly]\label{ex:graph products gone mad}
Let us consider the graph product for $\Lambda =(\{v,w\},\emptyset)$ with 
\[S_v = \langle b\rangle \rtimes \langle q\rangle \cong \N\rtimes \N \cong \langle b\rangle \rtimes \langle q\rangle = S_w\]
for $p,q \in \N_{\geq 2}$, that is, the free product of two one-dimensional subdynamics from \ref{subsec:ex2}. As $\Lambda$ is edge-free, $S:=S(\Lambda)$ has
\begin{enumerate}[(a)]
\item $S_c=S^*= \{1_S\}$; and
\item $\CI(S) = \{(a,1),(0,p),(b,1),(0,q)\}$ with $\Gamma(S)$ having two edges $([(a,1)],[(0,p)])$ and $([(b,1)],[(0,q)])$.
\end{enumerate}
So $\Gamma(S)$ is coconnected but has edges, so $S$ does not satisfy (ii) of Theorem~\ref{thm:char existence of the GS}. Therefore $S$ does not admit a generalized scale even though both $S_v$ and $S_w$ do.
\end{example}

It appears that a natural choice for the substitute of the generalized scale in Example~\ref{ex:graph products gone mad} would be obtained by combining the generalized scales on $S_v$ and $S_w$ suitably with regard to $\Lambda$. More precisely, we could consider the homomorphism determined by $(a,1),(b,1) \mapsto 2, (0,p) \mapsto 2p$, and $(0,q)\mapsto 2q$. In this specific example we might, for each coconnected component,
\begin{enumerate}[(a)]
\item count the number of connected components, and
\item pick the generalized scale on each of the connected components
\end{enumerate}
instead.

Such problems cannot occur if we restrict our attention to the special case where all the vertex semigroups are isomorphic to $\N$, that is, right-angled Artin monoids $S(\Lambda)=: A_\Lambda^+$. Here, the essential obstruction to the existence of the generalized scale in the coconnected case is the presence of edges, as $\Gamma(A_\Lambda^+) \cong \Lambda$, see Theorem~\ref{thm:char existence of the GS}. This already follows from \cite{Sta4}*{Corollary~4.9}, though the approach taken there does not provide for a workaround for graphs with edges. In this respect, Theorem~\ref{thm:char existence of the GS} is a clear improvement: If $\Lambda$ is a coconnected graph with at least one edge (hence at least two vertices), then $A_\Lambda^+$ satisfies all but condition (ii) of Theorem~\ref{thm:char existence of the GS}. 

\subsection{Saito's degree map}\label{subsec:saito}
If $S$ admits a generalized scale $N\colon S \to \N^\times$, applying the logarithm results in a homomorphism $\text{deg}\colon S \to \R_{\geq 0}$. Under the assumption that the core $S_c$ is nothing more than the invertible elements $S^*$ of $S$, the map $\text{deg}$ happens to be a particular case of a \emph{degree map} in the sense of \cite{Sai}*{Section~4}. Degree maps are employed in \cite{Sai} to define a growth function and the skew-growth function for the monoid, which are then shown to be inverse to each other as Dirichlet series. In the notation of \cite{Sai} with a change of variable $t\mapsto e^{-\beta}$, we get 
\[\begin{array}{c} P_{\text{deg}}(\beta) = \sum\limits_{[s] \in S/_\sim} N_s^{-\beta} = \sum\limits_{n \in N(S)} n^{1-\beta} = \zeta_S(\beta), \end{array}\]
the $\zeta$-function appearing in \cite{ABLS}*{Definition~4.2}. The skew-growth function appears in \cite{ABLS}*{Remark~7.4} and is crucial for the reconstruction formula for KMS$_\beta$-states on $C^*(S)$ in \cite{ABLS}*{Lemma~7.5}. With this perspective and the inversion formula established in \cite{Sai}, degree maps (with extra properties related to the intersection of right ideals) may yield interesting dynamics on $C^*$-algebras associated to cancellative monoids, for which the structure of KMS-states can be studied to a satisfactory degree. The assumption $S_c=S^*$ from above is needed to harmonize the two approaches as Saito identifies elements in the monoid up to $S^*$, while an identification up to $S_c$ is more natural in the approach pursued in \cite{ABLS}. It may thus be very interesting to see how much of Saito's theory can be transferred from $S^*$ to $S_c$.



\section*{References}
\begin{biblist}
\bibselect{bib}
\end{biblist}

\end{document}